  \newcommand{\Exp}{{\mathbb{E}}}
  \DeclareMathOperator{\cb}{cb}
  \DeclareMathOperator{\jcb}{jcb}
  \DeclareMathOperator{\sym}{sym}
  \newcommand{\R}{\mathbb{R}} 
  \newcommand{\C}{\mathbb{C}} 
  \newcommand{\N}{\mathbb{N}} 
  \newcommand{\Z}{\mathbb{Z}} 
  \newcommand{\T}{\mathbb T} 
  \newcommand{\pmset}[1]{\{-1,1\}^{#1}} 
  \newcommand{\Id}{\ensuremath{\mathop{\rm Id}\nolimits}}
 \newcommand{\Zp}{\mathbb Z_p} 
  \newcommand{\st}{:\,} 
  \newcommand{\A}{\mathcal A}
  \newcommand{\B}{\mathcal B}
  \newcommand{\HS}{\mathcal H}
  \newcommand{\beq}{\begin{equation}}
  \newcommand{\eeq}{\end{equation}}
  \newcommand{\beqn}{\begin{equation*}}
  \newcommand{\eeqn}{\end{equation*}}
  \newcommand{\beqr}{\begin{eqnarray}}
  \newcommand{\eeqr}{\end{eqnarray}}
  \newcommand{\beqrn}{\begin{eqnarray*}}
  \newcommand{\eeqrn}{\end{eqnarray*}}
  \newcommand{\bmline}{\begin{multline}}
  \newcommand{\emline}{\end{multline}}
  \newcommand{\bmlinen}{\begin{multline*}}
  \newcommand{\emlinen}{\end{multline*}}
  \theoremstyle{plain}
  \newtheorem{theorem}{Theorem}[section]
  \newtheorem{lemma}[theorem]{Lemma}
  \newtheorem{proposition}[theorem]{Proposition}
  \newtheorem{corollary}[theorem]{Corollary}
  \theoremstyle{definition}
  \newtheorem{definition}[theorem]{Definition}
  \theoremstyle{remark}
  \newtheorem{remark}[theorem]{Remark}
  \renewenvironment{proof}[1][]{
    	\begin{trivlist}
     	\item[\hspace{\labelsep}{\em\noindent Proof#1:\/}]}
     	{{\hfill$\Box$}
    	\end{trivlist}
  }
\begin{document}

\begin{frontmatter}[classification=text]

\title{Failure of the trilinear operator space Grothendieck Theorem} 

\author[jop]{Jop Bri\"{e}t\thanks{Supported  by  a  VENI  grant  and  the  Gravitation grant  NETWORKS-024.002.003  from  the Netherlands Organisation for Scientific Research (NWO).}}
\author[carlos]{Carlos Palazuelos\thanks{Supported by  the Spanish ``Ram\'on y Cajal program'' (RYC-2012-10449), ``Severo Ochoa Programe'' for Centres of Excellence (SEV-2015-0554) and MEC (grant MTM2017-88385-P)}}

\begin{abstract}
We give a counterexample to a trilinear version of the operator space Grothendieck theorem. In particular, we show that for trilinear forms on~$\ell_\infty$, the ratio of the symmetrized completely bounded norm and the jointly completely bounded norm is in general unbounded,
answering a question of Pisier. The proof is based on a non-commutative version of the generalized von Neumann inequality from additive combinatorics.
\end{abstract}
\end{frontmatter}

\section{Introduction}

In the following, let $\mathcal A,\mathcal B$ be $C^*$-algebras and let $\Phi:\A\times \B\to\C$ be a bilinear form.
The  fundamental  theorem  in  the  metric  theory  of  tensor products, better known as Grothendieck's theorem or GT~\cite{Gr53} implies that  if $\A,\B$ are commutative, then the following holds. There exists a universal constant $K$ such that $\|\Phi\| \leq \|\Phi\|_{\gamma_2} \leq K\|\Phi\|$, where~$\|\Phi\|$ is the operator norm and $\|\Phi\|_{\gamma_2} $ is the factorization norm, which quantifies how well the bilinear form factorizes through the inner product of Hilbert spaces: 
\beqn
\|\Phi\|_{\gamma_2}
=
\inf\{
\|\Psi_1\|\|\Psi_2\|\},
\eeqn
where the infimum is taken over Hilbert spaces~$\HS$ and linear maps $\Psi_1:\A\to\HS,
\Psi_2:\B\to\HS$ such that for any $a\in \A,b\in \B$, we have $\Phi(a,b) = \langle \Psi_1(a),\Psi_2(b)\rangle$.
In the same work, Grothendieck conjectured that the assumption that~$\A,\B$ are commutative is unnecessary. 
This was first proved by Pisier in \cite{Pi78} under some approximation assumptions and later in full generality by Haagerup~\cite{Ha85}. 
These results are not only important to Banach space theory, but also found applications in quantum information theory~\cite{Tsi:1987, CJPP:2015, RegevV12a,AAIKS:2016}, computer science~\cite{KN:2012,NRV:2013,BRS:2017} and combinatorics~\cite{CZ:2017}.

\subsection{The operator space GT}
The fact that $C^*$-algebras have a natural operator space structure~\cite{Pi03} invites the study of Grothendieck's theorem in this context. 
In this setting, the relevant norms are the so-called completely bounded norms, which we introduce below; we refer to~\cite{Pi12} for much more detailed information.
We will identify $M_d(\mathcal A)$, the space of $\mathcal A$-valued $d\times d$ matrices, with $\mathcal A\otimes M_d$ (and similarly for~$\mathcal B$).

The \emph{completely bounded norm} of~$\Phi$ is defined by
\beqn
\|\Phi\|_{\cb}
=
\sup_{d\in \N} \|{\Phi}_d\|,
\eeqn
where ${\Phi}_d:M_d(\A)\times M_d(\B) \to M_{d}$ is the ``lift'' given by
\begin{align*}
 &\Big( \sum_i a_i\otimes X_i, \sum_j b_j\otimes Y_j\Big) \mapsto\sum_{i,j} \Phi(a_i,b_j)X_i Y_j.
\end{align*}
The \emph{jointly completely bounded norm} of~$\Phi$ is defined by
\beqn
\|\Phi\|_{\jcb}= \sup_{d\in \N}\|\widetilde\Phi_d\|,
\eeqn
where $\widetilde\Phi_d:M_d(\A)\times M_d(\B) \to M_{d^2}$ is  given by
\begin{align*}
 &\Big( \sum_i a_i\otimes X_i, \sum_j b_j\otimes Y_j\Big) \mapsto\sum_{i,j} \Phi(a_i,b_j)X_i\otimes Y_j.
\end{align*}

It is easy to see that $\|\Phi\|_{\jcb}\leq\|\Phi\|_{\cb}$ (consider the operators $X_i\otimes \Id$ and 
$\Id\otimes Y_j$ when computing~$\|\Phi_d\|$).
It follows from Grothendieck's theorem that if~$\mathcal A,\mathcal B$ are commutative $C^*$-algebras, then these norms are equivalent.
However, their ratio is unbounded in general.
An important difference between these two norms is that only the second is commutative, by which we mean the following.
Define $\Phi^{\mathsf T}:\B\times \A\to \C$ by $\Phi^{\mathsf T}(b,a) = \Phi(a, b)$. 
Then, the jointly completely bounded norm is invariant with respect to this operation, but the completely bounded norm generally is not. 
The following ``symmetrized'' version of the completely bounded norm, introduced in~\cite{OP99}, is again commutative in this sense:
\beqn
\|\Phi\|_{\sym}
=
\inf\big\{\|\Psi_1\|_{\cb} + \|\Psi_2^{\mathsf T}\|_{\cb} \st  \Phi = \Psi_1 + \Psi_2\big\},
\eeqn 
where the infimum is over bilinear forms $\Psi_1,\Psi_2:\A\times \B\to \C$.
It turns out that this norm is equal to an operator space version of the factorization norm mentioned above, provided Hilbert spaces are endowed with the right operator space structure~\cite[Section~18]{Pi12}.
It still holds that $\|\Phi\|_{\jcb}\leq \|\Phi\|_{\sym}$.
Indeed, it follows from the above that
for any decomposition $\Phi = \Psi_1 + \Psi_2$, we have $$\|\Phi\|_{\jcb}\leq \|\Psi_1\|_{\jcb}+\|\Psi_2\|_{\jcb}=\|\Psi_1\|_{\jcb}+\|\Psi_2^{\mathsf T}\|_{\jcb}\leq \|\Psi_1\|_{\cb}+\|\Psi_2^{\mathsf T}\|_{\cb}.$$

Pisier and Shlyakhtenko~\cite{PS02} proved that under certain conditions on the $C^*$-algebras,  the jointly completely bounded norm and the symmetrized completely bounded norm are equivalent, giving an operator space version of Grothendieck's theorem. This result was refined by Haagerup and Musat~\cite{HM08} showing the following result. 

\begin{theorem}[Operator space GT]\label{thm:osgt}
Let $\mathcal A,\mathcal B$ be $C^*$-algebras and let $\Phi:\A\times \B\to \C$ be a bilinear form. Then, $\|\Phi\|_{\jcb} \leq \|\Phi\|_{\sym} \leq 2\|\Phi\|_{\jcb}$.
\end{theorem}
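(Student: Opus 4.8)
The first inequality, $\|\Phi\|_{\jcb}\le\|\Phi\|_{\sym}$, has already been checked above, so the content is the bound $\|\Phi\|_{\sym}\le 2\|\Phi\|_{\jcb}$; after rescaling we may assume $\|\Phi\|_{\jcb}\le1$, and we must produce a decomposition $\Phi=\Psi_1+\Psi_2$ into bilinear forms with $\|\Psi_1\|_{\cb}+\|\Psi_2^{\mathsf T}\|_{\cb}\le2$. The plan is to reduce this to the existence of states $f_1,f_2$ on $\A$ and $g_1,g_2$ on $\B$ such that
\[
|\Phi(a,b)|\le\bigl(f_1(aa^*)+f_2(a^*a)\bigr)^{1/2}\bigl(g_1(bb^*)+g_2(b^*b)\bigr)^{1/2}\qquad(a\in\A,\ b\in\B).
\]
Granting this, one passes to the GNS representations of the four states and reads off a factorization of $\Phi$ through a direct sum of a column Hilbert space and a row Hilbert space; splitting off the column summand and the row summand yields $\Phi=\Psi_1+\Psi_2$, and the fact that the $f_i,g_j$ are \emph{states} forces the relevant GNS maps to be completely contractive into the appropriate row or column operator space, so that $\|\Psi_1\|_{\cb}\le1$ and $\|\Psi_2^{\mathsf T}\|_{\cb}\le1$. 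Here one uses that $\|\cdot\|_{\sym}$ coincides with the operator space factorization norm through row/column Hilbert spaces, as recalled above following \cite[Section~18]{Pi12}; the constant $2$ is precisely the cost of adding the column piece to the row piece, and it is known to be optimal. This assembly step is routine once the displayed inequality is available, so everything reduces to proving it.

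To establish the state inequality I would first reduce to finite-dimensional $C^*$-algebras, i.e.\ finite direct sums of matrix algebras: state spaces are weak-$*$ compact, the norm $\|\cdot\|_{\jcb}$ is determined by finite-dimensional data, and a standard ultraproduct (or local-reflexivity) argument then transfers the finite-dimensional statement back to the general one. In that setting the quadruple $(f_1,f_2,g_1,g_2)$ ranges over a compact convex set, and the function
\[
(f_1,f_2,g_1,g_2)\ \longmapsto\ \sup_{a,b}\,\bigl[\,2\,\mathrm{Re}\,\Phi(a,b)-f_1(aa^*)-f_2(a^*a)-g_1(bb^*)-g_2(b^*b)\,\bigr]
\]
is convex and lower semicontinuous on it (a supremum of affine functions of the states). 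A Hahn--Banach / minimax argument --- structurally the same as in Grothendieck's original treatment of the commutative case --- then shows that suitable states exist if and only if a \emph{dual} inequality holds, of the form: for some absolute constant $C$ and all finite systems $a_1,\dots,a_n\in\A$, $b_1,\dots,b_n\in\B$,
\[
\Bigl|\sum_{k=1}^n\Phi(a_k,b_k)\Bigr|\ \le\ C\,\max\!\Bigl(\bigl\|\sum_k a_ka_k^*\bigr\|,\bigl\|\sum_k a_k^*a_k\bigr\|\Bigr)^{1/2}\ \max\!\Bigl(\bigl\|\sum_k b_kb_k^*\bigr\|,\bigl\|\sum_k b_k^*b_k\bigr\|\Bigr)^{1/2}.
\]

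This dual inequality is the main obstacle, and it is genuinely hard --- in particular it does \emph{not} follow from the definition of $\|\Phi\|_{\jcb}$ by the obvious substitution. Indeed, writing $A=\sum_k a_k\otimes e_{k1}\in M_n(\A)$ for the column and $B=\sum_l b_l\otimes e_{1l}\in M_n(\B)$ for the row, one has $\|A\|=\|\sum_k a_k^*a_k\|^{1/2}$, $\|B\|=\|\sum_l b_lb_l^*\|^{1/2}$, and $\sum_k\Phi(a_k,b_k)=\Tr\!\bigl(W\,\widetilde\Phi_n(A,B)\bigr)$ for a fixed rank-$n$ partial isometry $W$; but bounding this trace through $\|\widetilde\Phi_n(A,B)\|\le\|A\|\,\|B\|$ loses a factor equal to the trace norm of $W$, namely $n$, and is useless. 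Obtaining a dimension-free estimate requires genuine non-commutative input: one route, due to Pisier and Shlyakhtenko \cite{PS02}, embeds $\A$ and $\B$ into free group von Neumann algebras (or invokes exactness / QWEP), substitutes free semicircular operators in place of matrix units, and controls the resulting norms by the row/column maxima above using the operator-valued non-commutative Khintchine inequality together with Haagerup's estimates; another, due to Haagerup and Musat \cite{HM08}, is more hands-on, attains the sharp constant $2$, and dispenses with all approximation hypotheses on $\A$ and $\B$. Either way, the heart of the matter is this passage from the global bound $\sup_d\|\widetilde\Phi_d\|\le1$ to the local, pointwise state inequality; the reductions above and the reconstruction of the decomposition around it are comparatively routine.
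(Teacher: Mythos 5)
The paper does not prove Theorem~\ref{thm:osgt}; it is quoted as a black box, attributed to Pisier--Shlyakhtenko~\cite{PS02} (under approximation hypotheses) and Haagerup--Musat~\cite{HM08} (in full generality, with constant $2$), and serves only to motivate the trilinear question that the paper then answers negatively. The easy inequality $\|\Phi\|_{\jcb}\le\|\Phi\|_{\sym}$ is indeed the one the paper verifies in the lines preceding the theorem, exactly as you note. Your outline of the hard inequality follows the standard architecture: reduce to a Grothendieck-type state inequality via Hahn--Banach/minimax, recognize its dual as a row/column norm inequality, and your computation showing that plugging $A=\sum_k a_k\otimes e_{k1}$ and $B=\sum_l b_l\otimes e_{1l}$ into $\widetilde\Phi_n$ and tracing against a rank-$n$ partial isometry degrades by a factor of $n$ is precisely why this dual inequality does not follow formally from $\|\Phi\|_{\jcb}\le1$.

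Two caveats. First, the state inequality you write has a \emph{product of sums}, $|\Phi(a,b)|\le\bigl(f_1(aa^*)+f_2(a^*a)\bigr)^{1/2}\bigl(g_1(bb^*)+g_2(b^*b)\bigr)^{1/2}$; what is actually proved in~\cite{HM08} (the Effros--Ruan inequality) is the stronger \emph{sum of products}, $|\Phi(a,b)|\le f_1(aa^*)^{1/2}g_1(b^*b)^{1/2}+f_2(a^*a)^{1/2}g_2(bb^*)^{1/2}$. The latter yields the two-term decomposition $\Phi=\Psi_1+\Psi_2$ directly, with one summand factoring through $R\otimes_h C$ and the other through $C\otimes_h R$; your product-of-sums version follows from it by Cauchy--Schwarz but does not obviously reconstruct a two-term decomposition, since a factorization through $(R\oplus C)\otimes_h(R\oplus C)$ has four pieces, and the $R\otimes_h R$ and $C\otimes_h C$ pieces are not controlled by $\|\cdot\|_{\cb}$ or $\|\cdot^{\mathsf T}\|_{\cb}$. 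Second, and as you yourself flag, the passage from $\|\Phi\|_{\jcb}\le1$ to the dual inequality is simply deferred to~\cite{PS02,HM08}; that passage is the entire content of those papers. So this is an honest and essentially accurate road map to the cited result, with one wrong turn at the state inequality, but it is not a proof, and the paper does not attempt one either.
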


\subsection{Trilinear operator space GT}
A natural question is whether Theorem~\ref{thm:osgt} generalizes to trilinear forms. In particular, Pisier~\cite[Problem~21.3]{Pi12} asked the following:
Let $\mathcal{A}_1$, $\mathcal {A}_2$, $\mathcal {A}_3$ be $C^*$-algebras and let $S_3$ denote the permutation group on~$\{1,2,3\}$.
For a trilinear form $\Phi:\A_1\times\A_2\times\A_3\to \C$ and $\pi\in S_3$, define the trilinear form $\Phi\circ\pi:\A_{\pi(1)}\times\A_{\pi(2)}\times\A_{\pi(3)}\to\C$ by
\beq\label{eq:Phipi}
\Phi\circ\pi(a_{\pi(1)},a_{\pi(2)},a_{\pi(3)}) = \Phi(a_1,a_2,a_3).
\eeq
Define
\beqn
\|\Phi\|_{\sym}
=
\inf\Big\{\sum_{\pi\in S_3}\|\Psi_\pi\circ\pi\|_{\cb} \st  \Phi = \sum_{\pi\in S_3}\Psi_\pi\Big\},
\eeqn
where the infimum is over trilinear forms $\Psi_\pi: \mathcal{A}_1\times \mathcal{A}_2\times \mathcal{A}_3\to\C$ indexed by~$S_3$.
Define $\|\Phi\|_{\jcb}$ in the obvious way, using three-fold tensor products.
Then, is it true that  $\|\Phi\|_{\jcb} \leq \|\Phi\|_{\sym}\leq K\|\Phi\|_{\jcb}$ for some absolute constant~$K\in (0,\infty)$?

This question was originally formulated by asking if any trilinear form $\Phi:\mathcal{A}_1\times \mathcal{A}_2\times \mathcal{A}_3\to \C$ that is \emph{jointly completely bounded}, which is to say that $\|\Phi\|_{\jcb}<\infty$, is always \emph{completely bounded}, which is to say that $\|\Phi\|_{\sym}<\infty$. However, this formulation is equivalent by the Open Mapping Theorem.

Here we answer this question in the negative.
In particular, we show that such an inequality can fail already in the simplest possible scenario; that is for commutative~$C^*$-algebras.

\begin{theorem}\label{thm:main}
There exist absolute constants $C>0$ and $c > 0$ such that the following holds.
For every $n\in \N$, there exists a trilinear form $\Phi:\ell_\infty^n\times\ell_\infty^n\times\ell_\infty^n\to \C$ such that $\|\Phi\|_{\sym} \geq C n^c\|\Phi\|_{\jcb}$.
\end{theorem}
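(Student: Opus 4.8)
The plan is to construct~$\Phi$ explicitly from a trilinear form on an abelian group that witnesses a genuinely ``higher-order'' phenomenon, so that its jointly completely bounded norm stays small while its symmetrized norm is forced to be large. The natural candidate is the multiplication form associated with the Gowers~$U^3$ norm: fix a finite abelian group~$G$ (say~$G=\mathbb{Z}_N$ or~$\mathbb{F}_2^k$) and a function~$f:G\to\C$, and let~$\Phi$ be the trilinear form on~$\ell_\infty(G)\times\ell_\infty(G)\times\ell_\infty(G)$ (or a suitable restriction to~$n=|G|$ coordinates) defined by a convolution-type expression whose evaluation on characters reproduces~$\sum_{x+y+z=w} f(x)f(y)f(z)$ or a related multilinear average. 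The key structural fact to exploit is that the jointly completely bounded norm corresponds to a ``tensor'' evaluation — operators get combined via~$X_i\otimes Y_j\otimes Z_k$ — and for the form coming from a group this can be bounded using Fourier analysis plus the non-commitative (operator-valued) generalized von Neumann inequality advertised in the abstract: one shows~$\|\Phi\|_{\jcb}$ is controlled by~$\|\widehat f\|_{\ell^?}$ or by the~$U^2$-type norm of~$f$, which for a pseudorandom~$f$ is tiny.

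The second half of the argument is the lower bound on~$\|\Phi\|_{\sym}$. Here I would argue by contradiction: if~$\|\Phi\|_{\sym}\le K\|\Phi\|_{\jcb}$ for all~$n$ with an absolute~$K$, then decomposing~$\Phi=\sum_{\pi\in S_3}\Psi_\pi$ and using that each~$\|\Psi_\pi\circ\pi\|_{\cb}$ is a completely bounded norm of a form that is ``linear in one slot at a time'', one can apply the bilinear operator space GT (Theorem~\ref{thm:osgt}) slot-by-slot — freezing one argument — to each piece, obtaining a factorization of~$\Phi$ through Hilbert spaces in each of the three ways. Combining these factorizations should yield a bound on some genuinely quadratic/Fourier-$\ell^1$-type quantity attached to~$f$, i.e. it would force~$f$ to correlate with a quadratic phase or have small~$U^3$ norm control its~$\ell^1$ spectrum — precisely the kind of statement that fails for an explicit~$f$ (e.g.\ a generalized quadratic character, or~$f(x)=\exp(2\pi i x^2/N)$ type example, or a random low-degree object). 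Thus one extracts a function~$f$ with~$U^3$-mass bounded but~$U^2$-mass (or the relevant ``GT-factorizable'' proxy) polynomially larger, giving the~$n^c$ separation.

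Concretely, the steps in order: (1) Pick the group~$G$ and the function~$f$; the cleanest choice is likely~$\mathbb{F}_p^k$ with~$f$ a quadratic exponential, since there the~$U^3$ Gowers norm is exactly computable and equals its minimal value while lower-order norms are maximal. (2) Define~$\Phi$ on~$\ell_\infty^n$, $n=p^k$, via the trilinear average tied to~$f$, making sure it is genuinely trilinear and that its entries are explicit. (3) Prove the upper bound~$\|\Phi\|_{\jcb}\le (\text{poly small in }n)$ using the operator-valued generalized von Neumann inequality — this is the step where one must be careful that the inequality survives replacing scalars by matrices and tensoring, and it is the technical heart. (4) Prove the lower bound~$\|\Phi\|_{\sym}\ge (\text{poly large in }n)$ by the contradiction scheme above, invoking Theorem~\ref{thm:osgt} in each slot to reduce a hypothetical good~$\sym$-decomposition to a Fourier-analytic statement about~$f$ that contradicts the chosen~$f$'s extremal behavior. (5) Track the exponents to produce the absolute~$C,c$.

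The main obstacle I expect is step~(3): lifting the classical generalized von Neumann inequality (which bounds multilinear averages of bounded scalar functions by Gowers norms) to the operator setting needed for~$\|\Phi\|_{\jcb}$, where the ``test'' objects are tuples of matrices~$X_i,Y_j,Z_k$ of arbitrary size that are combined by tensor product, not multiplication. One has to set up the right non-commutative Cauchy--Schwarz / van der Corput iteration so that the operator norms telescope correctly and the final bound still reads as an honest Gowers-type norm of~$f$; controlling the operator norm of the resulting iterated tensor expression, rather than a trace, is the subtle point. A secondary difficulty is ensuring the~$\sym$-to-bilinear-GT reduction in step~(4) does not lose more than a constant factor per slot, so that the polynomial gap genuinely survives; this requires care in how the three Hilbert-space factorizations are glued and how ``$\mathrm{cb}$ in one variable'' is exploited.
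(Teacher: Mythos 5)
Your upper-bound plan (step 3) is on the right track and matches the paper: the key lemma is indeed a non-commutative generalized von Neumann inequality that bounds $\|\Phi\|_{\jcb}$ by a Gowers-type norm of the ``coefficient'' function $f_0$, and the proof is a matrix-valued Cauchy--Schwarz / van der Corput iteration in which the tensor factors $X\otimes\Id\otimes\Id$, $\Id\otimes Y\otimes\Id$, $\Id\otimes\Id\otimes Z$ commute, so the scalar iteration lifts verbatim. However, your choice of $f$ is wrong in a way that kills the argument. You want $\|f\|_{U^3}$ small, but a quadratic exponential $f(x)=\exp(2\pi i\,q(x)/p)$ (and likewise any degree-$2$ polynomial phase) has $\|f\|_{U^3}=1$, the \emph{maximal} value, since the triple-difference $\Delta_{h_1}\Delta_{h_2}\Delta_{h_3}f$ is identically $1$; it is the $U^2$ norm of a quadratic phase that is small. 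You need a \emph{cubic} phase (the paper uses $f_0(x)=e^{2\pi i x^3/p}$, whose triple difference is $\omega^{6h_1h_2h_3}$, giving $\|f_0\|_{U^3}\le(2/p)^{1/8}$) or, as you also suggest, a random $\pm1$ function. The specific trilinear form also matters: the paper uses the $3$-AP average $\Phi(f_1,f_2,f_3)=\sum_{x,y}f_0(y)f_1(x)f_2(x+y)f_3(x+2y)$, whose slices are (scaled) permutation matrices of norm $1$, a fact you will need below.

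Your step (4) is the real gap. The plan to argue by contradiction by ``freezing one slot'' of each $\Psi_\pi\circ\pi$ and applying the bilinear operator space GT does not lead anywhere concrete: there is no argument given, or apparent, that combines three slot-wise Hilbert-space factorizations into a Fourier-analytic or $\ell_1$-spectral constraint on $f$, and the completely bounded norm of a trilinear form is not controlled by its bilinear restrictions in this way. What actually works, and what the paper does, is a \emph{direct} lower bound: Lemma~\ref{Lower commuting} shows $\|\Psi\|_{\sym}$ dominates $\sup\|\Psi_d(X_1,X_2,X_3)\|$ over triples of contractions with pairwise commuting ranges (because under commutativity $\Psi_d(X_1,X_2,X_3)=(\Psi_\pi\circ\pi)_d(X_{\pi(1)},X_{\pi(2)},X_{\pi(3)})$ for every $\pi$, so the symmetrized decomposition gives nothing new). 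Then Varopoulos' block-matrix construction (nilpotent $4\times4$ block matrices built from the slices $M_i$) produces commuting contractions with $\sum\Psi(e_i,e_j,e_k)X_iX_jX_k$ equal to $\|\Psi\|_{\ell_2}^2/\Delta(\Psi)$ in a corner entry, giving $\|\Psi\|_{\sym}\ge\|\Psi\|_{\ell_2}^2/\Delta(\Psi)$ for symmetric $\Psi$. For the $3$-AP form one has $\|\Phi\|_{\ell_2}^2=p^2$ and $\Delta(\Phi)=1$, so after a symmetrization $\Phi\mapsto\overline\Phi=\sum_{\pi}(\Phi\otimes E)\circ\pi$ (which preserves $\Delta$, multiplies $\|\cdot\|_{\ell_2}^2$ and $\|\cdot\|_{\jcb}$ by $6$, and makes the form symmetric so the lemma applies) one gets $\|\overline\Phi\|_{\sym}\ge 6p^2$ against $\|\overline\Phi\|_{\jcb}\le 6p^2(2/p)^{1/8}$, i.e.\ $c=1/8$. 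You would need to replace your contradiction scheme with something of this direct-construction type, and you would need the symmetrization step (or the abstract operator-space argument the paper sketches) to remove the symmetry hypothesis from the Varopoulos lemma.
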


Other trilinear versions of Grothendieck's theorem have already been shown to fail in the past.
Smith~\cite{Smi:1988} gave counterexamples to hoped-for trilinear versions of a Grothendieck-type theorem for completely bounded bilinear forms on $C^*$-algebras due to Pisier~\cite{Pi78} and Haagerup~\cite{Ha85}.
Blecher~\cite{Ble:1989} introduced the notion of tracially completely bounded multilinear forms. These maps form a subspace strictly contained in the space of completely bounded multilinear forms.
It was shown there that bounded bilinear forms on $C^*$-algebras are always tracially completely bounded, which may be interpreted as another Grothendieck-type theorem, but that this is false for trilinear forms in general.
However, these works did not concern the jointly completely bounded norm, which is the appropriate norm in the context of operator spaces.
In~\cite{PGWP+08} it was shown that the operator norm and the jointly completely bounded norm are not equivalent for trilinear forms on commutative $C^*$-algebras (proving the existence of bounded trilinear forms which are not jointly completely bounded). This can be understood as a failure of yet another version of Grothendieck's theorem. The main result in~\cite{PGWP+08}  was later quantitatively improved in~\cite{BV13}, but the optimal ratio between these norms as a function of the dimension is still an open problem.

Remarkably, both the jointly and symmetrized completely bounded norms again turn out to play an important role in quantum information theory.  While the first appears naturally in the context of tri-partite entanglement, in particular as the quantum bias of three-player XOR games (or equivalently, the quantum value of a tripartite correlation Bell inequality)~\cite{PV:2016}, the second norm was recently used in the context of quantum algorithms, to give a characterization of quantum query complexity~\cite{ABP:2018}. 
In a sense, Theorem \ref{thm:main} can be also read as an absence of a direct connection between these topics.

The proof of Theorem~\ref{thm:main} uses a non-commutative version of the generalized von Neumann inequality from additive combinatorics.
This inequality allows us to upper bound the jointly completely bounded norm of certain structured trilinear forms, given by a function~$f$ on a finite Abelian group~$\Gamma$, by the Gowers 3-uniformity norm of~$f$.
An argument of Varopoulos can be used to show that the symmetrized completely bounded norm of such  trilinear forms is always at least~$|\Gamma| \|f\|_{\ell_2}^2$. A simple explicit choice of a function~$f$ from the group~ $\Zp:=\Z/p\Z$ for  prime $p \geq 5$ to the complex unit circle gives the result with $c = 1/8$.
This follows from an elementary Weil-type exponential sum estimate used to upper bound the Gowers 3-uniformity norm of~$f$ by $(2/p)^{1/8}p^2$, while the Varopoulos bound shows that the symmetrized completely bounded norm is at least~$p^2$. In the last section we comment on possible modifications of our construction.

\section{Proof of Theorem~\ref{thm:main}}

\subsection{Preliminaries}
We use the following notational conventions and  basic facts.
Denote $[n] = \{1,\dots,n\}$ and $\T = \{w\in \C \st |w| = 1\}$.
For a set~$S$ let $(e_s)_{s\in S}$ be the standard basis for~$\C^S$.
Below, the set~$S$ will vary but will be clear from the context.
Let ${B_{M_d} = \{X\in M_d \st \|X\| \leq 1\}}$, where $\|X\|$ denotes the usual operator norm on~$M_d$.
 Recall that the commutator of $X,Y\in M_d$ is defined by $[X,Y] = XY - YX$ and that $X,Y$ are said to commute if their commutator is zero.
We will  use the standard notation~$\ell_\infty^n$ for the $n$-dimensional commutative $C^*$-algebra given by~$\C^n$ endowed with the sup norm and coordinate-wise multiplication.
We refer to a trilinear form~$\Phi:\ell_\infty^n\times\ell_\infty^n\times\ell_\infty^n\to \C$ as a trilinear form on~$\ell_\infty^n$.

Note that~$\ell_\infty^n$ can be identified with the space of $n\times n$ diagonal matrices endowed with the operator norm. 
In turn, this implies that $M_d(\ell_\infty^n)$ can be identified with the space of maps $X:[n]\to M_d$, where~$X(i)$ corresponds to the $i$th diagonal block of an element in~$M_d(\ell_\infty^n)$.
As such, the unit ball of $M_d(\ell_\infty^n)$ consists of the maps~$X$ such that $X(i)\in B_{M_d}$ for all $i\in[n]$. Then, it is not hard to see that
\beq\label{cb-norm}
\|\Phi\|_{\cb}  = \sup\big\{\big\|\Phi_d(X,Y,Z)\|_{M_d} \st d\in \N, \: X,Y,Z:[n]\to B_{M_d}\big\},
\eeq
where
\beq\label{eq:Phid}
\Phi_d(X,Y,Z) = \sum_{i,j,k=1}^n\Phi(e_i,e_j,e_k)X(i)Y(j)Z(k).
\eeq
Note that if $\Phi = \sum_{\pi\in S_3}\Psi_\pi$ for some trilinear forms $\Psi_\pi$, then this decomposition holds also for the ``lifts'':
$\Phi_d = \sum_{\pi\in S_3}(\Psi_\pi)_d$.
Similarly,
\beq\label{jcb-norm}
\|\Phi\|_{\jcb}  = \sup\big\{\big\|\widetilde\Phi_d(X,Y,Z)\|_{M_{d^3}} \st  d\in \N, \: X,Y,Z:[n]\to B_{M_d}\big\},
\eeq
where
\beq\label{eq:Phidtilde}
\widetilde\Phi_d(X,Y,Z)= \sum_{i,j,k=1}^n\Phi(e_i,e_j,e_k)X(i)\otimes Y(j)\otimes Z(k).
\eeq

\subsection{The example}

Let~$\Gamma$ be a finite Abelian group and $f_0:\Gamma\to\C$ be some function. 
 Identify $\ell_\infty^\Gamma$ with the function space~$L_\infty(\Gamma)$. 
Define the trilinear form  $\Phi:L_\infty(\Gamma)\times L_\infty(\Gamma)\times L_\infty(\Gamma)\to \C$ by
\beq\label{def:APform}
\Phi(f_1,f_2,f_3) = \sum_{x,y\in \Gamma} f_0(y)f_1(x)f_2(x+y)f_3(x+2y).
\eeq
Theorem~\ref{thm:main} is based on a form as above, for the group~$\Zp$ with prime~$p \geq 5$. To get an example for arbitrary integer~$n\geq 4$, one can choose an odd prime between~$n/2$ and~$n$ (which exists by Bertrand's postulate) and embed~$\Phi$ as in~\eqref{def:APform} based on this group into a trilinear form on~$\ell_\infty^n$ in the obvious way.
In the following two subsections we upper and lower bound the jointly completely bounded norm and the symmetrized completely bounded norm, respectively.

\subsection{Bounding the jointly completely bounded norm}

Let~$\Phi$ be a trilinear form as in~\eqref{def:APform}.
We bound its jointly completely bounded norm using a non-commutative version of the generalized von Neumann inequality. The scalar version of this inequality, a basic tool in additive combinatorics, shows that the operator norm of~$\Phi$ can be bounded from above in terms of the Gowers uniformity norm of~$f_0$.
It was observed already in~\cite{BBBLL:2018} that this inequality holds also for the jointly completely bounded norm; in fact, they prove a more general version than what we use here.
Here, we give an alternative short proof---a straightforward adaptation of the standard proof of the scalar case~\cite[Chapter~11]{TV06}---for the version that is sufficient for our purpose.
To state the inequality, we first define the Gowers uniformity norms (we refer to~\cite{TV06} for more information on these norms).

For a finite set~$S$, denote
\beqn
\Exp_{s\in S}[f(s)] = \frac{1}{|S|}\sum_{s\in S}f(s).
\eeqn

\begin{definition}[Gowers uniformity norms]
Let $k$ be a positive integer, let $\Gamma$ be a finite Abelian group and $f:\Gamma\to \C$ be some function.
Then, the Gowers $U^k$-norm of~$f$ is given by
\beqn
\|f\|_{U^t(\Gamma)}=\big|
\Exp_{x,h_1,\dots,h_k}\big[
(\Delta_{h_1}\cdots\Delta_{h_k}f)(x)
\big]\big|^{\frac{1}{2^k}},
\eeqn
where $\Delta_h f(x) = \overline{f(x)}f(x+h)$.
\end{definition}

The case $k=1$ is strictly speaking not a norm but it is a seminorm.
As an example, the 8th power of the Gowers $U^3$-norm is given by
\begin{multline}
\big|\Exp\big[\overline{f(x)}f(x+h_1)f(x+h_2)f(x+h_3)\overline{f(x+h_1+h_2)}\times\\
\overline{f(x+h_1+h_3)f(x+h_2+h_3)}f(x+h_1+h_2+h_3)\big]\big|,\label{eq:U3}
\end{multline}
where the expectation is over independent uniform $x,h_1,h_2,h_3\in \Gamma$.

Our upper bound on~$\|\Phi\|_{\jcb}$ is based on the following inequality.

\begin{proposition}\label{prop:matrixgvn}
Let~$\Gamma$ be a finite Abelian group and let~$f_0: \Gamma\to \C$ be some function. 
Then, for~$\Phi$ as in~\eqref{def:APform}, we have
\beqn
\|\Phi\|_{\jcb} \leq |\Gamma|^{2}\,\|f_0\|_{U^3(\Gamma)}.
\eeqn
\end{proposition}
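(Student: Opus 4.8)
The plan is to adapt the standard Cauchy–Schwarz iteration used to prove the scalar generalized von Neumann inequality, but keeping track of matrix-valued quantities and the operator norm on $M_{d^3}$. Fix $d\in\N$ and $X,Y,Z:\Gamma\to B_{M_d}$; by~\eqref{jcb-norm} it suffices to bound $\|\widetilde\Phi_d(X,Y,Z)\|_{M_{d^3}}$ by $|\Gamma|^2\|f_0\|_{U^3(\Gamma)}$. Writing out~\eqref{eq:Phidtilde} with the substitution forced by the support of $\Phi$ in~\eqref{def:APform}, we have
\beqn
\widetilde\Phi_d(X,Y,Z)=\sum_{x,y\in\Gamma} f_0(y)\,X(x)\otimes Y(x+y)\otimes Z(x+2y).
\eeqn
To estimate its norm, pick unit vectors $\xi,\eta\in\C^{d^3}$ (which will eventually realize the norm up to $\eps$) and consider the scalar quantity $T=\langle\eta,\widetilde\Phi_d(X,Y,Z)\,\xi\rangle$. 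The first step is to view $T$ as an average (over $x$, or after a change of variables, over a "base point") of an inner product of the form $\sum_y f_0(y)\,\langle u(x,y), v(x,y)\rangle$ where $u,v$ are unit-vector-valued functions built from two of the three maps $X,Y,Z$, and then apply Cauchy–Schwarz in the $x$-variable to remove the dependence of one factor on one of the $X,Y,Z$'s. Iterating this three times — once per "direction" $h_1,h_2,h_3$ coming from differencing the three arithmetic-progression conditions $x$, $x+y$, $x+2y$ — each application squares the quantity and introduces a fresh difference parameter $h_i$, while systematically eliminating the matrix factors (their norms being bounded by $1$, the tensor structure lets us peel them off one tensor-leg at a time). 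After the three steps, all dependence on $X,Y,Z$ has disappeared, and what remains is exactly the $2^3$-fold product of $f_0$-values appearing in~\eqref{eq:U3}, averaged over $x,h_1,h_2,h_3$ — i.e. $\|f_0\|_{U^3(\Gamma)}^8$ up to the bookkeeping of normalization constants.

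The one genuinely delicate point, and the reason the bound is $|\Gamma|^2\|f_0\|_{U^3}$ rather than $\|f_0\|_{U^3}$, is tracking normalization: the Gowers norm is defined with \emph{expectations} $\Exp$, i.e. normalized sums, whereas $\Phi$ in~\eqref{def:APform} is an \emph{unnormalized} sum over $x,y\in\Gamma$. Each Cauchy–Schwarz step with respect to a sum over $|\Gamma|$ values of an auxiliary variable, when passing from sums to expectations, costs a factor of $|\Gamma|$ inside the $2^k$-th root; doing the accounting carefully, starting from the double sum over $(x,y)$ and performing three Cauchy–Schwarz steps to reach the $U^3$ structure, yields precisely the prefactor $|\Gamma|^2$ after taking the eighth root. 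I would therefore be careful to write each step as "$|T|^2 \le (\text{number of terms in the outer average}) \cdot \sum (\cdots)$" and only convert to $\Exp$-notation at the very end.

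The main obstacle — really the only non-routine part — is making sure the \emph{non-commutative} Cauchy–Schwarz manipulations are valid, i.e. that one can peel off the matrix factors $X(i),Y(j),Z(k)$ while only using $\|X(i)\|,\|Y(j)\|,\|Z(k)\|\le 1$ and not needing them to commute. This is exactly where the \emph{tensor} (as opposed to product) structure of $\widetilde\Phi_d$ in~\eqref{eq:Phidtilde} is essential: because the three matrices live on three independent tensor legs of $\C^{d^3}=\C^d\otimes\C^d\otimes\C^d$, applying Cauchy–Schwarz in one variable and bounding the operator norm of a partial contraction on one leg never disturbs the other two legs, so each step cleanly removes one family of matrices. (This is precisely the feature that fails for $\|\cdot\|_{\cb}$, where the matrices are multiplied and can fail to commute — consistent with Theorem~\ref{thm:main}.) Once this is set up correctly, the calculation is a mechanical three-fold iteration mirroring \cite[Chapter~11]{TV06}, and the proposition follows by taking the supremum over $d$, $X$, $Y$, $Z$ and the unit vectors $\xi,\eta$.
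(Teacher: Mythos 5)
Your plan is correct and follows the same underlying idea as the paper's proof---an iterated Cauchy--Schwarz argument that peels off $X$, then $Y$, then $Z$ in three steps, exploiting the fact that the three families act on independent tensor legs of $\C^{d^3}$ so that no commutativity of $X(i)$, $Y(j)$, $Z(k)$ is ever needed, only their norm bounds. Your observation that this is exactly where $\|\cdot\|_{\jcb}$ differs from $\|\cdot\|_{\cb}$ is also the right intuition. Where you diverge from the paper is in the level of abstraction: you work directly with $\widetilde\Phi_d$ and test vectors $\xi,\eta\in\C^{d^3}$, reducing to scalar Cauchy--Schwarz; the paper instead introduces a matrix-valued Gowers uniformity norm (Definition~2.4), proves a ``matrix van der Corput'' lemma (Lemma~\ref{lem:vdCorput}) purely at the operator-norm level via
$\|\Exp_s {\bf B}(s)F(s)\|\le\|\Exp_s {\bf B}(s){\bf B}(s)^*\|^{1/2}\|\Exp_s F(s)^*F(s)\|^{1/2}$,
and then establishes a non-commutative generalized von Neumann inequality (Lemma~\ref{lem:matrixgvN2}) for arbitrary \emph{commuting} operator families $A_0,\dots,A_3$, of which the tensor-leg construction $A_1=X\otimes\Id\otimes\Id$, etc., is a special case. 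The paper's abstraction buys you two things you don't get from the vector-level argument: (i) the bookkeeping with test vectors and the normalization constant $|\Gamma|^2$ becomes mechanical (the $|\Gamma|^2$ just drops out when converting the outer sum over $(x,y)$ to an expectation, with no factor tracking inside the Cauchy--Schwarz steps), and (ii) the stronger conclusion noted in the remark after Proposition~\ref{prop:matrixgvn}, namely that the same bound holds for the norm $\|\Phi\|_{\mathrm{com}}$ defined using commuting---but not tensor-product---operator families on $B(\mathcal H)$, which your tensor-leg argument does not directly give. Your proposal would yield Proposition~\ref{prop:matrixgvn} correctly, but if you wanted to flesh it out I'd recommend adopting the paper's operator-level formulation, since it is both shorter and strictly more general.
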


To prove this result, let us introduce the following non-commutative version of the Gowers uniformity norms.

\begin{definition}
For positive integes~$d,k$, a finite Abelian group~$\Gamma$ and function $F:\Gamma \to M_d$, define
\beqn
\|F\|_{U^k(\Gamma)}
=
\big\| \Exp_{x,h_1,\dots,h_{k}}(\Delta_{h_1}\cdots\Delta_{h_{k}}F)(x)\big \|^{\frac{1}{2^{k}}},
\eeqn
where (with abuse of notation) $(\Delta_h F)(x) = F(x)^*F(x+h)$.
\end{definition}

\begin{remark}
In general it appears to be unknown if these functions are also norms (but for our purposes we do not need them to be).
Expressions related to the case $k = 2$ were studied in works of Gowers and Hatami~\cite{GowersH:2017} and Chiffre, Ozawa and Thom~\cite{ChiffreOT:2019}.
\end{remark}

Proposition~\ref{prop:matrixgvn} follows from the following key lemma, which is a non-commutative version of the generalized von Neumann theorem.

\begin{lemma}\label{lem:matrixgvN2}
Let  $d\in \N$ and let~$\Gamma$ be a finite Abelian group. 
Let $A_0, A_1,A_2,A_3:\Gamma \to B_{M_d}$ be maps such that for all $x,y\in \Gamma$ and distinct $i,j=0,1,2,3$, we have $[A_i(x),A_j(y)]=[A_i(x)^*,A_j(y)]=0$. Then,
\beqn\label{eq:gvN}
\big\|\Exp_{x,y\in \Gamma}A_0(y)A_1(x) A_2(x+y)A_3(x+2y) \big\|
\leq
\|A_{0}\|_{U^{3}(\Gamma)}.
\eeqn
\end{lemma}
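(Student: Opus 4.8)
The plan is to imitate the standard proof of the scalar generalized von Neumann inequality~\cite[Chapter~11]{TV06}, which makes three successive applications of Cauchy--Schwarz, each one ``peeling off'' one of the three functions $A_1,A_2,A_3$ while doubling the number of copies of $A_0$ and introducing one fresh difference operator on it; after three steps one is left with the $U^3$-quantity of $A_0$. The commutation hypotheses are precisely what is needed to rearrange the matrix products so that this goes through; they are never used to simplify a product $A_i(x)A_i(x')$ with a repeated index. The one new ingredient is an operator-valued Cauchy--Schwarz inequality for peeling a single bounded factor off the left: \emph{if $S$ is a finite set and $C_s,D_s\in M_d$ with $\|C_s\|\le1$ for all $s$, then $\|\Exp_{s\in S}C_sD_s\|^2\le\|\Exp_{s\in S}D_s^*D_s\|$.} This follows by writing $\|\Exp_sC_sD_s\|=\sup\{|\Exp_s\langle C_s^*u,D_sv\rangle| \st \|u\|=\|v\|=1\}$, applying Cauchy--Schwarz to the average over $s$, and using $\Exp_s\|C_s^*u\|^2\le1$ together with $\Exp_s\|D_sv\|^2=\langle v,(\Exp_sD_s^*D_s)v\rangle\le\|\Exp_sD_s^*D_s\|$; note that $\Exp_sD_s^*D_s$ is positive semidefinite and that this inequality itself needs no commutation.

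Now set $T=\Exp_{x,y}A_0(y)A_1(x)A_2(x+y)A_3(x+2y)$. In \textbf{Step 1}, use $[A_0(y),A_1(x)]=0$ to write $T=\Exp_xA_1(x)D_x$ with $D_x=\Exp_yA_0(y)A_2(x+y)A_3(x+2y)$, and apply the operator Cauchy--Schwarz with $C_x=A_1(x)$ to get $\|T\|^2\le\|\Exp_xD_x^*D_x\|$. Expanding $D_x^*D_x$ over $y,y'$ and setting $y'=y+h_1$, the two central $A_0$-factors fuse into $A_0(y)^*A_0(y+h_1)=(\Delta_{h_1}A_0)(y)$, which (by the hypothesis and its adjoint form) commutes with every $A_2$- and $A_3$-factor and their adjoints; a linear change of summation variable then lets one collect the two $A_2$-factors to the far left, giving
\beqn
\|T\|^2\le\Big\|\Exp_{u,h_1}\big[A_2(u)^*A_2(u+h_1)\big]\,\big(\Exp_yA_3(u+y)^*(\Delta_{h_1}A_0)(y)A_3(u+y+2h_1)\big)\Big\|.
\eeqn
\textbf{Steps 2 and 3} iterate this. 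A second operator Cauchy--Schwarz peels off $A_2(u)^*A_2(u+h_1)$ (indexed by $s=(u,h_1)$); expanding the resulting $D_s^*D_s$ over a new pair $y,y'=y+h_2$, the commutation relations let one fuse $(\Delta_{h_1}A_0)(y)^*(\Delta_{h_1}A_0)(y+h_2)=(\Delta_{h_2}\Delta_{h_1}A_0)(y)$ and collect the $A_3$-factors; after another change of variables this leaves, at the far left, a norm-$\le1$ product of four $A_3$-factors, which a third operator Cauchy--Schwarz peels off, and expanding the last square over $y,y'=y+h_3$ produces $(\Delta_{h_3}\Delta_{h_2}\Delta_{h_1}A_0)(y)$. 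Since each step squares the left-hand side, one arrives at $\|T\|^8\le\|\Exp_{y,h_1,h_2,h_3}(\Delta_{h_1}\Delta_{h_2}\Delta_{h_3}A_0)(y)\|=\|A_0\|_{U^3(\Gamma)}^8$ after relabelling the $h_i$, which is the claim.

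The main work, and the only place the hypotheses enter, is the bookkeeping in these three steps: after each Cauchy--Schwarz one must (i) check that the object currently in the role of $A_0$---first $\Delta_{h_1}A_0$, then $\Delta_{h_2}\Delta_{h_1}A_0$---still commutes with all surviving $A_2$- and $A_3$-factors and their adjoints, (ii) choose the correct linear change of summation variables so that the next factor to be peeled depends only on the outer averaging variables, and (iii) verify that fusing adjacent difference factors of $A_0$ reproduces exactly the nested form $F(x)^*F(x+h)$ that the matrix $U^k$-norm is built from. The doubling in $x+2y$ merely turns into a $2h_1$ inside an $A_3$-argument, which causes no trouble since those factors are only ever bounded trivially by~$1$; in particular no invertibility of $2$ in $\Gamma$ is needed.
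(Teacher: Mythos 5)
Your proposal is correct and follows essentially the same route as the paper: three successive Cauchy--Schwarz steps that peel off $A_1$, then $A_2$, then $A_3$, with the commutation hypotheses used at each stage to fuse the accumulating $A_0$-differences into nested $\Delta$'s and to collect the next-to-be-peeled factor to the outside. The only cosmetic difference is that the paper isolates the recurring step as a ``matrix van der Corput'' lemma (with the $\Gamma$-averaging built in) and proves it via the operator inequality $\|\Exp_s B(s)F(s)\|\le\|\Exp_s B(s)B(s)^*\|^{1/2}\|\Exp_s F(s)^*F(s)\|^{1/2}$, whereas you use a bare operator Cauchy--Schwarz (proved by testing against unit vectors) and carry out the $\Gamma$-expansion by hand in each step; the two are interchangeable.
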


A version of Lemma~\ref{lem:matrixgvN2} with $k$-term arithmetic progressions instead of 3-term arithmetic progressions also holds with the right-hand side replaced with $\|A_0\|_{U^k(\Gamma)}$.
More generally, other known variants of the scalar case hold also in this non-commutative setting.
The proof of Lemma~\ref{lem:matrixgvN2} uses the following ``matrix van der Corput lemma''.

\begin{lemma}\label{lem:vdCorput}
Let $\Gamma$ be a finite Abelian group, let $S$ be a finite set and for each $s\in S$ let $F_s: \Gamma\to M_d$. 
Then, for any map ${\bf B}: S\to B_{M_d}$, 
\beqn
\big\|\Exp_{s \in S}\Exp_{x\in \Gamma}{\bf B}(s)F_s(x)\big\|
\leq
\big\|
\Exp_{s\in S}\Exp_{x,h\in \Gamma}(\Delta_h F_s)(x)
\big\|^{\frac{1}{2}}.
\eeqn
\end{lemma}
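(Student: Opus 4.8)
I would prove Lemma~\ref{lem:vdCorput} by the standard Cauchy–Schwarz trick underlying the van der Corput inequality, adapted to the matrix-valued setting. Writing $T = \Exp_{s\in S}\Exp_{x\in\Gamma}{\bf B}(s)F_s(x)$, the first step is to bound $\|T\|$ via a test vector: pick unit vectors $u,v\in\C^d$ achieving (up to $\eps$) $\|T\| = \langle u, Tv\rangle$, so that $\|T\| \approx \Exp_{s}\langle u, {\bf B}(s)\,\Exp_x F_s(x)\,v\rangle$. Then apply Cauchy–Schwarz in the variable $s$ (i.e. over the uniform measure on $S$), pulling out the ${\bf B}(s)$ factor: since $\|{\bf B}(s)\|\le 1$, we get $\|u^*{\bf B}(s)\|\le 1$, hence
\beqn
\|T\|^2 \;\lesssim\; \Exp_{s\in S}\big\|\Exp_{x\in\Gamma}F_s(x)\,v\big\|^2 .
\eeqn
Expanding the squared norm as an inner product over two independent copies $x,x'\in\Gamma$ and substituting $x' = x+h$ turns $\Exp_{x,x'}\langle F_s(x)v, F_s(x')v\rangle$ into $\Exp_{x,h}\langle v, F_s(x)^*F_s(x+h)v\rangle = \Exp_{x,h}\langle v,(\Delta_h F_s)(x)v\rangle$. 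Taking $\Exp_s$ and bounding the scalar by the operator norm of $\Exp_{s}\Exp_{x,h}(\Delta_h F_s)(x)$ gives $\|T\|^2 \le \|\Exp_s\Exp_{x,h}(\Delta_h F_s)(x)\|$, which is the claim after a square root (and letting $\eps\to 0$).

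**Order of steps.** (1) Reduce the operator norm of $T$ to a bilinear form $\langle u, Tv\rangle$ against near-optimal unit vectors. (2) Use $\|{\bf B}(s)\|\le 1$ together with Cauchy–Schwarz over $s\in S$ to eliminate ${\bf B}(s)$ and square, at the cost of replacing the outer linear average by a quadratic one. (3) Expand $\big\|\Exp_x F_s(x)v\big\|^2$ over two independent copies of $x$ and reindex by the difference $h$ to produce the discrete derivative $\Delta_h F_s$. (4) Move $\Exp_s$ inside, recognize $\Exp_s\Exp_{x,h}(\Delta_h F_s)(x)$ as the operator appearing on the right, and bound the scalar inner product $\langle v,(\cdot)v\rangle$ by its operator norm. (5) Take square roots and send the approximation error to zero.

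**Main obstacle.** The delicate point, compared to the scalar van der Corput lemma, is that the averaged objects are operators rather than numbers, so the Cauchy–Schwarz step must be set up carefully: one cannot simply "move absolute values inside", but must first scalarize via the test vectors $u,v$ and only then apply Cauchy–Schwarz to the resulting complex numbers. One also has to be attentive to the order of the factors — ${\bf B}(s)$ sits on the left of $F_s(x)$, and $F_s(x)^*$ on the left of $F_s(x+h)$ in the definition of $\Delta_h$ — so that the substitution $x'=x+h$ produces exactly $F_s(x)^*F_s(x+h)$ and not its transpose. Everything else is a routine reprise of the classical argument; no commutativity hypotheses are needed here, since those only enter later when Lemma~\ref{lem:vdCorput} is iterated to prove Lemma~\ref{lem:matrixgvN2}.
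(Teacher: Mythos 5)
Your proposal is correct and follows essentially the same route as the paper: the paper applies an operator-valued Cauchy--Schwarz inequality $\|\Exp_s {\bf B}(s)F(s)\|\le\|\Exp_s {\bf B}(s){\bf B}(s)^*\|^{1/2}\|\Exp_s F(s)^*F(s)\|^{1/2}$ as a known fact and then drops the first factor using $\|{\bf B}(s)\|\le 1$, whereas you derive precisely this step from scratch by scalarizing with near-optimal test vectors $u,v$ and applying the scalar Cauchy--Schwarz over $s$. The remaining steps (expanding $\|\Exp_x F_s(x)v\|^2$ over two copies and reindexing $x'=x+h$) are identical.
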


\begin{proof}
Let $F(s) = \Exp_{x\in \Gamma}F_s(x)$.
The Cauchy--Schwarz inequality and boundedness of ${\bf B}$ give
\begin{align*}
\big\|\Exp_{s \in S}\Exp_{x\in \Gamma} {\bf B}(s)F_s(x)\big\|&=
\big\|\Exp_{s\in S}{\bf B}(s)F(s)\big\|\\
&\leq
\big\|\Exp_{s\in S} {\bf B}(s){\bf B}(s)^*\big\|^{\frac{1}{2}}
\big\|\Exp_{s\in S} F(s)^*F(s)\big\|^{\frac{1}{2}}\\
&\leq
\big\|\Exp_{s\in S} \Exp_{x,y\in \Gamma}F_s(x)^*F_s(y)\big\|^{\frac{1}{2}}.
\end{align*}
The claim now follows by substituting $y = x+h$.
\end{proof}

\begin{proof}[ of Lemma~\ref{lem:matrixgvN2}]
We will repeatedly use the fact that the map $(x,y)\mapsto (x-y,y)$ on $\Gamma\times\Gamma$ is bijective.
The proof uses Lemma~\ref{lem:vdCorput} three times, with different choices of $S, {\bf B}$ and $F_s$.

First, let $S = \Gamma$, let ${\bf B} = A_1$ and let $F_x(y) = A_2(x+y)A_3(x+2y)A_0(y)$.
Then Lemma~\ref{lem:vdCorput} and commutativity of the $A_i$ give
\begin{align}
\big\|\Exp_{x,y} A_0(y)A_1(x) A_2(x+y)A_3(x+2y) \big\|^8
&=
\big\|\Exp_{x}{\bf B}(x)\Exp_yF_x(y)\big\|^8\nonumber\\
&\leq
\big\|
\Exp_{x}\Exp_{y,h_1}(\Delta_{h_1}F_x)(y)
\big\|^{4}.
\label{eq:pf1}
\end{align}
Using the above-mentioned change of variables, the right-hand side of~\eqref{eq:pf1} equals 
\beq\label{eq:pf1b}
\big\|\Exp_{x,h_1}\Exp_y F_{x-y}(y)^* F_{x-y}(y+h_1)\big\|^4.
\eeq

Second, using the properties of the maps~$A_2,A_3,A_0$, it follows that
\beqn
F_{x-y}(y)^* F_{x-y}(y+h_1) =
A_2(x)^*A_2(x + h_1)A_3(x+y)^*A_3(x+y+2h_1)\, (\Delta_{h_1}A_0)(y).
\eeqn
Now let $S=\Gamma\times \Gamma$ and factor the above as
\begin{align*}
{\bf B}(x,h_1) &= A_2(x)^*A_2(x + h_1)\\
F_{x,h_1}(y) &= A_3(x+y)^*A_3(x+y+2h_1)\, (\Delta_{h_1}A_0)(y).
\end{align*}
From Lemma~\ref{lem:vdCorput} and another change of variables, it follows that the right-hand side of~\eqref{eq:pf1b} is at most
\beq\label{eq:pf2}
\big\|
\Exp_{x,h_1}\Exp_{y, h_2}(\Delta_{h_2}F_{x,h_1})(y)
\big\|^2
=
\big\|
\Exp_{x,h_1,h_2}\Exp_{y}F_{x-y,h_1}(y)^*F_{x-y,h_1}(y+h_2)
\big\|^2.
\eeq

Third, it follows from the properties of $A_3, A_0$ that
\beqn
F_{x-y,h_1}(y)^*F_{x-y,h_1}(y+h_2)
=A_3(x+2h_1)^*A_3(x)A_3(x+h_2)^* A_3(x+2h_1+h_2) (\Delta_{h_2}\Delta_{h_1}A_0)(y).
\eeqn
Finally set $S=\Gamma\times \Gamma \times \Gamma$ and factor the above as
\begin{align*}
{\bf B}(x,h_1,h_2) &=A_3(x+2h_1)^*A_3(x)A_3(x+h_2)^*A_3(x+2h_1+h_2)\\
F_{x,h_1,h_2}(y) &= (\Delta_{h_2}\Delta_{h_1}A_0)(y).
\end{align*}
Again by Lemma~\ref{lem:vdCorput}, the right-hand side of~\eqref{eq:pf2} is at most
\beqn
\big\|\Exp_{x,h_1,h_2}\Exp_{y,h_3}(\Delta_{h_3}F_{x,h_1,h_2})(y)\big\|
=
\big\|
\Exp_{h_1,h_2,h_3,y}(\Delta_{h_3}\Delta_{h_2}\Delta_{h_1}A_0)(y)
\big\|,
\eeqn
giving the claim.
\end{proof}

\begin{proof}[ of Proposition ~\ref{prop:matrixgvn}]
For any $d\in \N$ and
 $X,Y,Z:\Gamma\to B_{M_d}$, define $A_0,A_1,A_2,A_3: \Gamma\to B_{M_{d^3}}$ by
 \begin{align*}
A_0(x) &=f_0(x) \Id\otimes \Id \otimes \Id\\
A_1(x) &=X(x)\otimes \Id \otimes \Id\\
A_2(x) &=\Id\otimes Y(x) \otimes \Id\\
A_3(x) &=\Id\otimes \Id \otimes Z(x).
\end{align*}
Then, the statement follows trivially from Lemma~\ref{lem:matrixgvN2}  and noting that the factor $|\Gamma|^{2}$ comes from replacing sums with expectations.
\end{proof}

\begin{remark}
Note that Lemma~\ref{lem:matrixgvN2} also applies if $M_d$ is replaced by the space $B(\mathcal H)$ of bounded operators on a (possibly infinite-dimensional) Hilbert space~$\mathcal H$. Moreover, the upper bound stated in Proposition~\ref{prop:matrixgvn} even applies if one replaces the jointly completely bounded norm by
\begin{align*}
&\|\Phi\|_{\mathrm{com}}:=
\sup\Big\{\Big\| \sum_{i,j,k=1}^n \Phi(e_i,e_j,e_k) X_1(i)X_2(j)X_3(k)\Big\|_{\mathcal B(\mathcal H)}\Big\},
\end{align*}
where the supremum is over maps $X_1,X_2,X_3:[n] \to B_{\mathcal B(\mathcal H)}$ such that $[X_i(k),X_j(l)]=0$ and $[X_i(k)^*,X_j(l)]=0$ for all $k,l\in [n]$ and $i\neq j$. 
\end{remark}

\begin{proposition}\label{prop:weyl}
Let $p\geq 5$ be a prime,  $\omega = e^{2\pi i/p}$ and $f_0:\Zp\to \C$ be the function given by $f_0(x) = \omega^{x^3}$.
Then,
\beqn
\|f_0\|_{U^3(\Zp)} \leq (2/p)^{1/8}.
\eeqn
\end{proposition}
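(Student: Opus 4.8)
The plan is to expand $\|f_0\|_{U^3(\Zp)}^8$ directly using formula~\eqref{eq:U3}, substitute $f_0(x) = \omega^{x^3}$, and reduce the resulting multidimensional exponential sum to a single-variable Weil-type bound. Writing out the eight cubic terms with the appropriate signs, I would compute the exponent as an integer-coefficient polynomial $P(x,h_1,h_2,h_3)$ in $\Zp$ obtained from the alternating sum $\sum_{\eps\in\{0,1\}^3}(-1)^{|\eps|}(x+\eps_1h_1+\eps_2h_2+\eps_3h_3)^3$. Since a third-degree polynomial has its third "discrete derivative" constant, this alternating sum collapses: all the $x$-dependence and most of the $h_i$-dependence cancels, leaving something of the shape $6\,h_1h_2h_3$ (the mixed cubic monomial, times the multinomial coefficient $3!=6$). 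Here the hypothesis $p\geq 5$ is exactly what makes $6$ invertible mod $p$, so the phase is a nondegenerate trilinear form in $h_1,h_2,h_3$.

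Thus I expect to arrive at
\beqn
\|f_0\|_{U^3(\Zp)}^8
=
\Big|\Exp_{x,h_1,h_2,h_3\in\Zp}\big[\omega^{6h_1h_2h_3}\big]\Big|
=
\Big|\Exp_{h_1,h_2,h_3\in\Zp}\big[\omega^{6h_1h_2h_3}\big]\Big|,
\eeqn
the $x$-average being trivial. Now I would fix $h_2,h_3$ and average over $h_1$ first: for fixed $(h_2,h_3)$ the inner sum $\Exp_{h_1}[\omega^{6h_1h_2h_3}]$ equals $1$ if $6h_2h_3\equiv 0\pmod p$, i.e.\ if $h_2=0$ or $h_3=0$ (using $p\geq 5$), and equals $0$ otherwise. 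The number of pairs $(h_2,h_3)$ with $h_2h_3\equiv 0$ is $2p-1$, so the whole expectation has absolute value $\tfrac{2p-1}{p^2}\leq \tfrac{2}{p}$. Taking eighth roots gives $\|f_0\|_{U^3(\Zp)}\leq (2/p)^{1/8}$, as claimed.

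The only real care needed is the algebraic bookkeeping in the first step: one must verify that the alternating sum of the eight cubics over the Boolean cube really does reduce to $6h_1h_2h_3$ and that no stray linear or quadratic terms in a single $h_i$ survive. This is a routine finite-difference computation (the $t$-th iterated difference annihilates polynomials of degree $<t$, and applied to $x^3$ with three shifts it returns $3!\,h_1h_2h_3$), but it is where a sign error or a miscounted coefficient would do the most damage, so I would carry it out carefully. Everything after that — the collapse of the $x$-average and the two-step geometric-sum evaluation over the $h_i$ — is elementary, and the role of $p\geq 5$ appears twice: once to invert $6$ and once to ensure that $h_2h_3\equiv0$ forces a coordinate to vanish.
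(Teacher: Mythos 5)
Your proposal is correct and takes essentially the same route as the paper: compute the iterated discrete derivative $(\Delta_{h_1}\Delta_{h_2}\Delta_{h_3}f_0)(x) = \omega^{6h_1h_2h_3}$, note that the $x$-average is trivial, and evaluate the remaining expectation over $h_1,h_2,h_3$ to get exactly $(2p-1)/p^2 \le 2/p$. The only cosmetic difference is that you unpack the iterated difference as an alternating sum over the Boolean cube rather than invoking $\Delta_{h_1}\Delta_{h_2}\Delta_{h_3}$ directly, but the computation and the role of $p\ge 5$ are the same.
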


\begin{proof}
A straightforward calculation shows that for $x,h_1,h_2,h_3\in \Z_p$, we have
\beqn
(\Delta_{h_1}\Delta_{h_2}\Delta_{h_3}f_0)(x)
=
\omega^{6 h_1h_2h_3}.
\eeqn
It follows that
\begin{align*}
\|f_0\|_{U^3(\Zp)}^{8}
=
\Exp_{h_1,h_2\in\Z_p}\big[\Exp_{h_3\in\Zp}[\omega^{6h_1h_2h_3}]\big].
\end{align*}
The inner expectation over~$h_3$ is~1 if $h_1=0$ or $h_2=0$ and, since~$6$ is coprime relative to~$p$ and $\Z_p$ is a field, it is~0 otherwise.
Hence, the right-hand side equals $(2p-1)/p^2$, which gives the claim.
\end{proof}

\begin{corollary}\label{cor:weylPhi}
Let $p\geq 5$ be a prime, let $\Gamma = \Zp$ and let $f_0$ be as in Proposition~\ref{prop:weyl}.
Then, for~$\Phi$ as in~\eqref{def:APform}, we have
\beqn
\|\Phi\|_{\jcb} \leq p^2(2/p)^{1/8}.
\eeqn
\end{corollary}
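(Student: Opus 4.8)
The proof of Corollary~\ref{cor:weylPhi} is essentially immediate given the machinery already in place. The plan is to combine Proposition~\ref{prop:matrixgvn} with Proposition~\ref{prop:weyl}, applied to the specific group $\Gamma = \Zp$ and the specific function $f_0(x) = \omega^{x^3}$.

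Concretely, I would first invoke Proposition~\ref{prop:matrixgvn}, which states that for any finite Abelian group $\Gamma$ and any function $f_0 : \Gamma \to \C$, the trilinear form $\Phi$ defined in~\eqref{def:APform} satisfies $\|\Phi\|_{\jcb} \leq |\Gamma|^2 \, \|f_0\|_{U^3(\Gamma)}$. Specializing to $\Gamma = \Zp$ gives $|\Gamma| = p$, so $\|\Phi\|_{\jcb} \leq p^2 \, \|f_0\|_{U^3(\Zp)}$. Then I would substitute the bound from Proposition~\ref{prop:weyl}, namely $\|f_0\|_{U^3(\Zp)} \leq (2/p)^{1/8}$ for the cubic phase function, to obtain $\|\Phi\|_{\jcb} \leq p^2 (2/p)^{1/8}$, which is exactly the claimed inequality.

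There is no real obstacle here: the corollary is purely a matter of chaining two previously established estimates, and the only thing to check is that the hypotheses of both propositions are met, which they are by construction ($p \geq 5$ prime ensures $f_0$ is well-defined as stated and that Proposition~\ref{prop:weyl} applies). If anything, the one point worth a sentence of care is that Proposition~\ref{prop:matrixgvn} is stated for general $\Gamma$, so one should note explicitly that the form~$\Phi$ in the corollary is precisely the instance of~\eqref{def:APform} with $\Gamma = \Zp$ and with the cubic $f_0$, so that both prior results apply to the same object. Thus the full proof is a two-line deduction.

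\begin{proof}[ of Corollary~\ref{cor:weylPhi}]
By Proposition~\ref{prop:matrixgvn} applied with $\Gamma = \Zp$, the trilinear form $\Phi$ from~\eqref{def:APform} satisfies $\|\Phi\|_{\jcb} \leq p^2 \, \|f_0\|_{U^3(\Zp)}$. Combining this with the estimate $\|f_0\|_{U^3(\Zp)} \leq (2/p)^{1/8}$ from Proposition~\ref{prop:weyl} yields $\|\Phi\|_{\jcb} \leq p^2 (2/p)^{1/8}$, as claimed.
\end{proof}
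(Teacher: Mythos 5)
Your proof is correct and follows exactly the route the paper intends: the corollary is the immediate chaining of Proposition~\ref{prop:matrixgvn} (with $|\Gamma| = p$) and Proposition~\ref{prop:weyl}, which is why the paper states it as a corollary without a written-out proof.
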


\subsection{Bounding the symmetrized completely bounded norm}

To lower bound the symmetrized completely bounded norm, we first prove the following result.

\begin{lemma}\label{Lower commuting}
Let $\Psi$ be a trilinear form on~$\ell_\infty^n$.
Then,
\beqn
\|\Psi\|_{\sym} \geq \sup\big\{\|\Psi_d(X_1,X_2,X_3)\|_{M_d}\st d\in \N, \: X_1,X_2,X_3:[n]\to B_{M_d}\big\},
\eeqn
where the supremum is over maps $X_i$ such that $[X_i(k),X_j(l)]=0$ for all $k,l\in [n]$ and $i\neq j$.\footnote{Note that in contrast with the norm $\|\Psi\|_{\mathrm{com}}$ defined above, here we do not require that $[X_i(k)^*,X_j(l)] = 0$.}
\end{lemma}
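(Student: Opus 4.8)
The plan is to take any trilinear form $\Phi = \sum_{\pi \in S_3} \Psi_\pi$ realizing (nearly) the symmetrized norm, and show that for commuting matrix tuples each summand $\Psi_\pi \circ \pi$ contributes at most $\|\Psi_\pi \circ \pi\|_{\cb}$ to the relevant operator-valued expression, so that the sum is controlled by $\sum_\pi \|\Psi_\pi \circ \pi\|_{\cb}$. Concretely, fix $d \in \N$ and commuting families $X_1, X_2, X_3 : [n] \to B_{M_d}$ (commuting across different indices $i \neq j$), and consider $\Psi_d(X_1, X_2, X_3) = \sum_{i,j,k} \Psi(e_i, e_j, e_k) X_1(i) X_2(j) X_3(k)$. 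Using the decomposition $\Psi = \sum_\pi \Psi_\pi$ and linearity, $\Psi_d(X_1,X_2,X_3) = \sum_\pi (\Psi_\pi)_d(X_1,X_2,X_3)$. The key point is that for each fixed $\pi$, because the three families commute with one another, the product $X_1(i) X_2(j) X_3(k)$ equals $X_{\pi^{-1}(1)}(\cdot)\, X_{\pi^{-1}(2)}(\cdot)\, X_{\pi^{-1}(3)}(\cdot)$ with the indices permuted accordingly; hence $(\Psi_\pi)_d(X_1,X_2,X_3) = (\Psi_\pi \circ \pi)_d(X_{\pi(1)}, X_{\pi(2)}, X_{\pi(3)})$, an expression of the form that computes $\|\Psi_\pi \circ \pi\|_{\cb}$ via the formula~\eqref{cb-norm}. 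Since each $X_{\pi(m)}$ still maps into $B_{M_d}$, we get $\|(\Psi_\pi)_d(X_1,X_2,X_3)\|_{M_d} \leq \|\Psi_\pi \circ \pi\|_{\cb}$.

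Combining via the triangle inequality, $\|\Psi_d(X_1,X_2,X_3)\|_{M_d} \leq \sum_{\pi \in S_3} \|\Psi_\pi \circ \pi\|_{\cb}$. Taking the infimum over all decompositions $\Phi = \sum_\pi \Psi_\pi$ on the right gives $\|\Psi_d(X_1,X_2,X_3)\|_{M_d} \leq \|\Phi\|_{\sym}$ (here $\Phi = \Psi$). Finally, taking the supremum over $d$ and over all admissible commuting tuples $X_1, X_2, X_3$ yields the claimed inequality.

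The one point requiring care — and the main (minor) obstacle — is verifying that commutativity of the three families is exactly enough to rewrite $\sum_{i,j,k}\Psi_\pi(e_i,e_j,e_k) X_1(i)X_2(j)X_3(k)$ as a "lift" $(\Psi_\pi\circ\pi)_d$ evaluated on a permutation of the $X$'s. This is where the hypothesis that we only need $[X_i(k), X_j(l)] = 0$ (and not also $[X_i(k)^*, X_j(l)]=0$) is used: we never need to move adjoints past each other, only the operators themselves, so that the matrix product in~\eqref{eq:Phid} can be reordered freely to match whatever order the permuted form $\Psi_\pi \circ \pi$ expects. One just has to track the relabeling of summation indices induced by $\pi$ carefully, using the convention~\eqref{eq:Phipi}; everything else is the triangle inequality and the definitions~\eqref{cb-norm}.
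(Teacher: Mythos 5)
Your proposal is correct and follows essentially the same route as the paper's proof: decompose $\Psi=\sum_\pi\Psi_\pi$, apply the triangle inequality, use commutativity of the $X_i$ to rewrite each $(\Psi_\pi)_d(X_1,X_2,X_3)$ as $(\Psi_\pi\circ\pi)_d(X_{\pi(1)},X_{\pi(2)},X_{\pi(3)})$, and bound each term by the corresponding $\cb$-norm via~\eqref{cb-norm}. You also correctly identified why only $[X_i(k),X_j(l)]=0$ (and not the adjoint version) is needed, which is the same observation the paper relies on implicitly.
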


This result was already proved in \cite{OP99} in much greater generality and the authors showed that the quantities appearing in Lemma~\ref{Lower commuting} are equivalent. 
Since the proof of the inequality we need is straightforward, we add it for completeness. 

\begin{proof}
Let $\Psi =\sum_{\pi \in S_3} \Psi_{\pi}$ be some decomposition.
Let~$d$ be a positive integer and let $X_1,X_2,X_3:[n]\to M_d$ be maps with commuting ranges as in the lemma. By the triangle inequality,
\begin{align}\label{eq:pisum}
\|\Psi_d(X_1,X_2,X_3)\|_{M_d} 
&\leq
\sum_{\pi\in S_3}\| (\Psi_\pi)_d(X_1,X_2,X_3)\|_{M_d}.
\end{align}
We claim that each term on the right-hand side equals
\beqn
\|(\Psi_{\pi}\circ\pi)_d(X_{\pi(1)}, X_{\pi(2)}, X_{\pi(3)})\|_{M_d}.
\eeqn
This implies the lemma because the above is clearly at most $\|\Psi_{\pi}\circ\pi\|_{\cb}$.
To see the claim, first observe that by commutativity, it holds that for every $i_1,i_2,i_3\in[n]$ and $\pi\in S_3$, we have
\beq\label{eq:Xcomm}
X_1(i_1)X_2(i_2)X_3(i_3)
=
X_{\pi(1)}(i_{\pi(1)}) X_{\pi(2)}(i_{\pi(2)}) X_{\pi(3)}(i_{\pi(3)}).
\eeq
Let $\chi$ be some trilinear form on~$\ell_\infty^n$.
Recall from~\eqref{eq:Phipi} that
\beq\label{eq:chipi}
\chi(e_{i_{\pi^{-1}(1)}},e_{i_{\pi^{-1}(2)}},e_{i_{\pi^{-1}(3)}}) = (\chi\circ\pi)(e_{i_1},e_{i_2},e_{i_3}).
\eeq
Then, 
\begin{align*}
\chi_d(X_1,X_2,X_3)
&=
\sum_{i_1,i_2,i_3=1}^n \chi(e_{i_1},e_{i_2},e_{i_3})X_1(i_1)X_2(i_2)X_3(i_3)\\
&\stackrel{\eqref{eq:Xcomm}}{=}
\sum_{i_1,i_2,i_3=1}^n \chi(e_{i_1},e_{i_2},e_{i_3})X_{\pi(1)}(i_{\pi(1)}) X_{\pi(2)}(i_{\pi(2)}) X_{\pi(3)}(i_{\pi(3)})\\
&\stackrel{\eqref{eq:chipi}}{=}
\sum_{i_1,i_2,i_3=1}^n (\chi\circ\pi)(e_{i_1},e_{i_2},e_{i_3})X_{\pi(1)}(i_1)X_{\pi(2)}(i_2)X_{\pi(3)}(i_3)\\
&=
(\chi\circ\pi)_d(X_{\pi(1)},X_{\pi(2)},X_{\pi(2)}).
\end{align*}
Applying this to $\chi = \Psi_\pi$ for each $\pi$ gives the claim.
\end{proof}

A trilinear form~$\Psi$ on~$\C^n$ is \emph{symmetric} if $\Psi\circ\pi = \Psi$ holds for every $\pi\in S_3$.
A \emph{slice} of a (not necessarily symmetric) trilinear form $\Psi$  is an $n\times n$ matrix obtained by fixing one of the three coordinates (so there are $3n$ slices), for example
\beqn
M_i=(\Psi(e_i, e_j, e_k))_{j,k=1}^n.
\eeqn
We will denote
\beqn
\Delta(\Psi)
=
\max\{\|M\| \st \text{$M$ is a slice of $\Psi$}\}.
\eeqn
Also define
\beqn
\|\Psi\|_{\ell_2} = \Big(\sum_{i,j,k=1}^n|\Psi(e_i,e_j,e_k)|^2\Big)^{\tfrac{1}{2}}.
\eeqn

The following lemma, due to Varopoulos~\cite{Va74}, is the key to our lower bound on $\|\Psi\|_{\sym}$. Again, the proof is simple, so we add it for completeness.

\begin{lemma}\label{lem:varo}
Let~$\Psi$ be a symmetric trilinear form on~$\ell_\infty^n$.
Then,
\beqn
\|\Psi\|_{\sym}
\geq
\frac{\|\Psi\|_{\ell_2}^2}{\Delta(\Psi)}.
\eeqn
\end{lemma}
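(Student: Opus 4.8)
The plan is to use the lower bound from Lemma \ref{Lower commuting} with a cleverly chosen triple of maps $X_1, X_2, X_3$ with commuting ranges, engineered so that one ``diagonal'' block of the resulting matrix $\Psi_d(X_1,X_2,X_3)$ reproduces the $\ell_2$-norm of $\Psi$ while the operator norm of the whole matrix is controlled by $\Delta(\Psi)$ times a normalization. Concretely, I would take $d = n$, work in $M_n = M_n(\C)$ with standard matrix units $(E_{pq})_{p,q\in[n]}$, and set
\[
X_1(i) = E_{ii}, \qquad X_2(i) = E_{ii}, \qquad X_3(i) = \text{(some diagonal choice)},
\]
so that the ranges commute (all lie in the diagonal subalgebra). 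With diagonal $X_1, X_2, X_3$, however, $\Psi_n(X_1,X_2,X_3)$ only sees the ``diagonal'' entries $\Psi(e_i,e_i,e_i)$, which is too weak. The fix, which is the actual Varopoulos trick, is to use two of the maps to be diagonal and a third to be a single off-diagonal-supported operator, or—more in the spirit of the symmetric-form setting—to pick $X_1, X_2$ diagonal and choose $X_3$ to route the index bookkeeping. Let me describe the version I expect to work.

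Fix the first coordinate to range over $[n]$ via $X_1(i) = E_{ii}$, and similarly $X_2(j) = E_{jj}$; these commute with everything diagonal. For the third map, I want $X_3(k)$ to be an operator of norm $\le 1$ that ``collects'' the contributions. The key observation is that $\Psi_n(X_1, X_2, X_3) = \sum_{i,j,k} \Psi(e_i,e_j,e_k) E_{ii} E_{jj} X_3(k) = \sum_{i,k} \Psi(e_i,e_i,e_k) E_{ii} X_3(k)$, so pure diagonal choices lose the off-diagonal structure of $\Psi$. To recover the full $\ell_2$ mass I would instead keep $X_3(k)$ diagonal but make $X_1$ act by a shift-like permutation composed with a projection. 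In fact the cleanest route: since $\Psi$ is \emph{symmetric}, write $v \in \C^{n^3}$ for the coefficient tensor $v_{ijk} = \Psi(e_i,e_j,e_k)$, flatten it along the first index to a matrix $N = (\Psi(e_i, e_{jk}))$ of size $n \times n^2$, note $\|N\|_{HS} = \|\Psi\|_{\ell_2}$, and use Lemma \ref{Lower commuting} with $d = n^2$ (or $d=n$) choosing $X_1(i)$ to be the rank-one-ish operator built from the $i$-th row of $N$ suitably normalized, and $X_2, X_3$ the coordinate projections so the ranges commute. Then $\Psi_d(X_1,X_2,X_3)$ becomes essentially $N N^* / \Delta(\Psi)$ after the normalization forces $\|X_1(i)\|\le 1$, whose operator norm is at least $\mathrm{tr}(NN^*)/(n\,\Delta(\Psi))$... and here one sees the bound falls out as $\|\Psi\|_{\ell_2}^2/\Delta(\Psi)$ once the dimensional factors are tracked correctly. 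The symmetry of $\Psi$ is what guarantees that the three slice directions are interchangeable, so $\Delta(\Psi)$ is the single relevant quantity bounding $\|X_1(i)\|$.

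I expect the main obstacle to be the bookkeeping that simultaneously (i) keeps the three maps with genuinely commuting ranges, (ii) keeps each $\|X_\ell(\cdot)\| \le 1$ with the normalization involving exactly $\Delta(\Psi)$ and no worse constant, and (iii) extracts $\|\Psi\|_{\ell_2}^2$ rather than some smaller norm of a contracted tensor. The natural way to pin all three down is to test the operator $\Psi_d(X_1,X_2,X_3)$ against a single well-chosen unit vector (e.g. a flattened, normalized version of the coefficient tensor itself) rather than estimating its operator norm abstractly; the inner product then telescopes into $\sum_{i,j,k}|\Psi(e_i,e_j,e_k)|^2$ divided by the normalizing constant, and a Cauchy–Schwarz step identifies that constant with $\Delta(\Psi)$ using that each slice has operator norm at most $\Delta(\Psi)$. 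Once the test vector and the three maps are written explicitly, the remaining computation is a routine expansion, and the symmetry hypothesis enters only to let us use the \emph{same} $\Delta(\Psi)$ for whichever slice direction the chosen maps happen to expose.
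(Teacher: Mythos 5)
Your overall plan is right---apply Lemma~\ref{Lower commuting} with maps whose ranges commute, normalize by $\Delta(\Psi)$, and extract a single matrix entry equal to $\|\Psi\|_{\ell_2}^2/\Delta(\Psi)$---and your instinct to test against a single vector rather than estimate the operator norm abstractly matches exactly what the paper does. But the concrete construction you sketch does not satisfy the hypotheses of Lemma~\ref{Lower commuting}, and you never close this gap. You propose $X_2,X_3$ to be coordinate projections (diagonal) and $X_1(i)$ to be a ``rank-one-ish operator built from the $i$-th row of $N$.'' Such an $X_1(i)$ is not diagonal, so $[X_1(i),X_2(j)]\neq 0$ in general; your claim that ``the ranges commute'' is unjustified, and indeed it fails. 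You correctly identify this as the ``main obstacle'' but leave it unresolved, so the proof is incomplete at precisely the step that requires the actual idea.

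The idea you are missing is the Varopoulos nilpotent block construction, and in particular that one should use the \emph{same} map $i\mapsto X_i$ in all three arguments of $\Psi_d$ (which trivially satisfies the cross-commutativity of Lemma~\ref{Lower commuting} once the $X_i$ pairwise commute). Set $W_i=\Delta(\Psi)^{-1}M_i$ with $M_i=(\Psi(e_i,e_j,e_k))_{j,k}$ and let $X_i$ be the $(2n+2)\times(2n+2)$ strictly block-lower-triangular matrix whose three sub-diagonal blocks are $e_i$, $W_i^*$, $e_i^*$. Then $\|X_i\|=\max\{1,\|W_i\|\}\le 1$; the product $X_iX_j$ lands two blocks below the diagonal with entries $W_i^*e_j$ and $e_i^*W_j^*$, and the symmetry hypothesis $M_je_i=M_ie_j$ is exactly what makes $X_iX_j=X_jX_i$; and $X_iX_jX_k$ is a rank-one matrix with the single entry $\overline{\Psi(e_i,e_j,e_k)}/\Delta(\Psi)$ in the bottom-left corner (all four-fold products vanish by nilpotency). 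Summing $\Psi(e_i,e_j,e_k)X_iX_jX_k$ then produces the scalar $\|\Psi\|_{\ell_2}^2/\Delta(\Psi)$ in that corner, giving the lower bound. Without this (or some equivalent) explicit construction, the argument does not go through; in particular, no choice with two of the three maps diagonal can work, for the reason you yourself observed.

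Also, a small but real dimensional worry you flag---``once the dimensional factors are tracked correctly,'' with a stray $1/n$ appearing from $\mathrm{tr}(NN^*)/n$---is not a bookkeeping detail that resolves itself; it is a sign that the $NN^*$ route loses a factor of $n$ relative to the stated bound. Extracting a single corner entry, rather than passing through a trace, is what avoids that loss.
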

\begin{proof}
For each $i\in [n]$, let $M_i=(\Psi(e_i,e_j, e_k))_{j,k=1}^n$ be the slice obtained by fixing the first coordinate to~$i$.
Define $W_i = \Delta(\Psi)^{-1}M_i$ and note that this has operator norm at most~1.
For each $i\in[n]$, define the $(2n+2)\times(2n+2)$ block matrix
\beqn
X_i
=
{\footnotesize
\left[
\begin{array}{c|c|c|c}
\phantom{M}&&&\\
\hline
e_i&\phantom{M}&&\\
\hline
&W_i^*&\phantom{M}&\\
\hline
&&e_i^*&\phantom{M}
\end{array}
\right]},
\eeqn
where the row and column blocks have size $1,n,n,1$, respectively, and
where the empty blocks are filled with zeros.
Then, for all $i,j\in[n]$,
\beqn
X_i^*X_i
=
{\footnotesize
\left[
\begin{array}{c|c|c|c}
1&&&\\
\hline
&W_iW_i^*&&\\
\hline
&&e_ie_i^*&\\
\hline
&&&\phantom{x}
\end{array}
\right]}
\quad
\text{and}
\quad
X_iX_j
=
{\footnotesize
\left[
\begin{array}{c|c|c|c}
\phantom{x}&&&\\
\hline
&\phantom{x}&&\\
\hline
W_i^*e_j&&\phantom{x}&\\
\hline
&e_i^*W_j^*&&\phantom{x}
\end{array}
\right]}.
\eeqn
The first identity shows that $\|X_i\| = \max\{1,\|W_i\|\} \leq 1$.
Since $\Psi$ is symmetric, we have $M_je_i = M_ie_j$ for all $i,j$.
Therefore, the second identity shows that these matrices commute with each other.
Moreover,
\beqn
X_iX_jX_k
=
{\footnotesize
\left[
\begin{array}{c|c|c|c}
\phantom{x}&&&\\
\hline
&\phantom{x}&&\\
\hline
&&\phantom{x}&\\
\hline
e_i^*W_j^*e_k&&&\phantom{x}
\end{array}
\right]
=
\frac{1}{\Delta(\Psi)}
\left[
\begin{array}{c|c|c|c}
\phantom{x}&&&\\
\hline
&\phantom{x}&&\\
\hline
&&\phantom{x}&\\
\hline
\overline{\Psi(e_i,e_j, e_k)}&&&\phantom{\hat{\Psi}}
\end{array}
\right].
}
\eeqn

Hence, by Lemma~\ref{Lower commuting}, we get that
\begin{align*}
\|\Psi\|_{\sym}&
\geq 
\Big\| \sum_{i,j,k=1}^n \Psi(e_i,e_j, e_k)X_iX_jX_k\Big\|_{M_d} \geq 
  \frac{1}{\Delta(\Psi)}\sum_{i,j,k=1}^n|\Psi(e_i,e_j, e_k)|^2.
\end{align*} 
This concludes the proof.
\end{proof}

Below we present a self-contained proof of Theorem \ref{thm:main}, so that no prior knowledge of operator space theory is needed. 
But some of the facts we use can be proved faster based on some well-known---albeit somewhat non-trivial---facts from it. 
We briefly outline why this is the case.
Readers not familiar with this theory can safely skip the next few paragraphs and continue at Proposition~\ref{prop:deltaphi}.

Lemma \ref{lem:varo} also follows from the fact that for any trilinear forms $\Psi,\Phi$ on~$\ell_\infty^n$, we have 
\begin{align}\label{remark-reviewer}
\|\Psi\|_{\sym}\Delta (\Phi)\geq |\langle  \Psi, \Phi\rangle|
:=
\Big|\sum_{i,j,k=1}^n\Psi(e_i,e_j,e_k)\overline{\Phi(e_i,e_j,e_k)}\Big|.
\end{align}

We sketch the proof of (\ref{remark-reviewer}).
The identities $\|id:\ell_1^n\rightarrow R_n\|_{\cb}=1$ and $\|id:\ell_1^n\rightarrow C_n\|_{\cb}=1$, where $R_n,C_n$ are the row and column operator spaces respectively, and  $R_n\otimes_h\ell_1^n\otimes_h C_n=S_1^n(\ell_1^n)=\ell_1^n(S_1^n)$  (see for instance \cite[Corollary~5.11]{Pi03}),  imply that the linear map 
$T:\ell_1^n\otimes_h\ell_1^n\otimes_h \ell_1^n\rightarrow \ell_1^n(S_1^n)$ defined by $$T(e_i\otimes e_j\otimes e_k)=e_j\otimes (e_i\otimes e_k),$$ is a (complete) contraction.

Then, using that the dual space of $\ell_1^n(S_1^n)$ is the space $\ell_\infty^n(S_\infty^n)$, one gets that for $\Psi,\Phi$ as above,
 $$\|\Psi\|_{\ell_1^n\otimes_h\ell_1^n\otimes_h \ell_1^n}\Delta(\Phi)\geq\|\Psi\|_{\ell_1^n\otimes_h\ell_1^n\otimes_h \ell_1^n}\max_j \|M_j(\Phi)\|_{S_\infty^n}\geq |\langle \Psi,\Phi\rangle|.$$

Hence, since $\Delta(\Phi)$ controls all the indices, it follows that for any permutation $\pi\in S_3$ we have $$\|\Psi\circ \pi\|_{\ell_1^n\otimes_h\ell_1^n\otimes_h \ell_1^n}\Delta(\Phi)\geq |\langle \Psi,\Phi\rangle|,$$from which~\eqref{remark-reviewer} is easily obtained.
Note that based on this argument,~$\Psi$ is not required to be symmetric and as a consequence, Proposition~\ref{prop:symm} below is no longer needed.

\begin{proposition}\label{prop:deltaphi}
Let $p\geq 3$ be a prime, let $\Gamma = \Z_p$, let $f_0:\Gamma\to \T$ and let~$\Phi$ be a trilinear form as in~\eqref{def:APform}.
Then, $\|\Phi\|_{\ell_2}^2=p^2$ and  ${\Delta(\Phi) = 1}$.
\end{proposition}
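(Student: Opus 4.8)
The plan is to compute both quantities directly from the definition~\eqref{def:APform}, exploiting that the group is $\Z_p$ with $p$ prime and that $f_0$ takes values in the unit circle~$\T$.

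First I would handle $\|\Phi\|_{\ell_2}^2$. By definition this is $\sum_{i,j,k}|\Phi(e_i,e_j,e_k)|^2$, where $\Phi(e_i,e_j,e_k)$ is obtained by reading off the coefficient of $f_1(i)f_2(j)f_3(k)$ in~\eqref{def:APform}. Writing $f_1=e_i$, $f_2=e_j$, $f_3=e_k$, the sum $\sum_{x,y}f_0(y)e_i(x)e_j(x+y)e_k(x+2y)$ collapses to the single term in which $x=i$, $x+y=j$, $x+2y=k$; this system has a (unique) solution in $\Z_p$ precisely when $i,j,k$ form a $3$-term arithmetic progression, i.e.\ $i+k=2j$ (using that $2$ is invertible mod~$p$ since $p\geq 3$), and then the coefficient is $f_0(y)=f_0(j-i)$, of modulus~$1$. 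There are exactly $p^2$ such triples $(i,j,k)$ (choose $i,y$ freely), so $\|\Phi\|_{\ell_2}^2 = p^2$.

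Next I would bound the slices. Fix, say, the first coordinate to $i$ and form $M_i=(\Phi(e_i,e_j,e_k))_{j,k}$. By the computation above, $\Phi(e_i,e_j,e_k)$ is nonzero only when $(i,j,k)$ is an AP, which for fixed $i$ means $k=2j-i$; so for each row index~$j$ there is exactly one column index $k=2j-i$ with a nonzero (unit-modulus) entry, and the map $j\mapsto 2j-i$ is a bijection of $\Z_p$ (again using invertibility of~$2$). Hence $M_i$ is a unitary times a permutation times a diagonal phase matrix, i.e.\ a scaled permutation matrix with unimodular entries, so $\|M_i\|=1$. The same argument, mutatis mutandis, applies when the fixed coordinate is the second or third: fixing $j$ gives the relation $i+k=2j$, and for each $i$ a unique $k=2j-i$; fixing $k$ gives $i+k=2j$ hence a unique $j=(i+k)/2$, again using that $2$ is a unit. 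In all cases each row (and column) of the slice contains exactly one nonzero entry, of modulus~$1$, so every slice is a monomial matrix and $\Delta(\Phi)=\max\|M\|=1$.

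I do not expect a serious obstacle here; the only points requiring care are (i) checking that $2\in\Z_p^\times$, which is exactly the hypothesis $p\geq 3$ (so that $p\neq 2$), and (ii) confirming that in \emph{all three} slice directions the defining relation reduces to a bijection of $\Z_p$ rather than something degenerate — the worrying case would be a coordinate whose constraint is vacuous or many-to-one, but the linear system $x=i$, $x+y=j$, $x+2y=k$ always pins down $x,y$ from any two of $i,j,k$ with the third determined, so no such degeneracy arises when $2$ is invertible. Everything else is bookkeeping.
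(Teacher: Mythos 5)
Your proposal is correct and follows essentially the same route as the paper: the paper likewise computes the first-coordinate slice as $M_x = \sum_{y\in\Gamma} f_0(y-x)\,e_y\otimes e_{2y-x}$, observes it is unitary because $y\mapsto 2y$ is a bijection of $\Z_p$, and treats the other slices analogously, while leaving the $\ell_2$-norm computation as a straightforward check. Your monomial-matrix phrasing and explicit tracking of where invertibility of $2$ enters are just a more spelled-out version of the same argument.
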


\begin{proof}
The first assertion is straightforward to check.
Let $\{e_x\st x\in \Gamma\}$ denote the standard basis for~$\C^\Gamma$.
Fix a $x\in \Gamma$ and consider the slice obtained by fixing the first coordinate of the tensor corresponding to~$\Phi$ to $x$:
\begin{align*}
M_x&= \sum_{y,z\in \Gamma}\Phi(e_x,e_y, e_z)e_y\otimes e_z\\
&= \sum_{y,z\in \Gamma}\sum_{u,v\in \Gamma}f_0(v)e_x(u)e_y(u+v)e_z(u+2v) e_y\otimes e_z\\
&= \sum_{y,z\in \Gamma}\sum_{v\in \Gamma}f_0(v)e_y(x+v)e_z(x+2v) e_y\otimes e_z\\
&= \sum_{y,z\in \Gamma}f_0(y-x)e_z(2y-x) e_y\otimes e_z\\
&=\sum_{y\in \Gamma}f_0(y-x)e_y\otimes e_{2y-x}.
\end{align*} Since for our group $\Zp$, the map $y\mapsto 2y$ is injective, it follows that~$M_x$ is a unitary matrix and therefore has norm~$1$.
The other slices can similarly be seen to have norm~$1$.
\end{proof}

\subsection{Putting everything together}

To apply Lemma \ref{lem:varo} we need to symmetrize our form. We  do this so as to approximately preserve  $\Delta(\Phi)$, $\|\Phi\|_{\ell_2}$ and $\|\Phi\|_{\jcb}$.\footnote{Perhaps a more natural symmetrization to consider is $\Phi_s=\sum_{\pi} \Phi\circ \pi$. However, the relevant values can be dramatically affected by this procedure, since we could get a zero tensor from a non-zero one.} To this end, we first consider the trilinear form $E:\C^3\times\C^3\times\C^3\to \C$ given by 
\beqn
E(u,v,w) = u_1v_2w_3.
\eeqn
For a trilinear form $\Psi$ on $\C^n$, 
 the trilinear form $\Psi\otimes E$ on $\C^{3n}$ is given by
\beqn
(\Psi\otimes E)(x\otimes u, y\otimes v,z\otimes w)
=
\Psi(x,y,z)E(u,v,w),
\eeqn
for $x,y,z\in \C^n$ and $u,v,w\in\C^3$.
If $\Psi$ is a trilinear form on~$\ell_\infty^n$, then we define the symmetrized version of~$\Psi$ to be the trilinear form~$\overline\Psi$ on~$\ell_\infty^{3n}$ by
\begin{align}\label{summetrization}
\overline\Psi
=
\sum_{\pi\in S_3} (\Psi\otimes E)\circ\pi.
\end{align}
 It is easy to see that~$\overline{\Psi}$ is symmetric.
Moreover, as per~\eqref{eq:Phipi}, for any $x_i\in \C^n$ and $u_i\in \C^3$ for $i=1,2,3$, we have 
\begin{multline}
\overline\Psi(x_1\otimes u_1, x_2\otimes u_2, x_3\otimes u_3)=\\
\sum_{\pi\in S_3} \Psi(x_{\pi^{-1}(1)}, x_{\pi^{-1}(2)}, x_{\pi^{-1}(3)})E(u_{\pi^{-1}(1)}, u_{\pi^{-1}(2)}, u_{\pi^{-1}(3)})=\\
\sum_{\pi\in S_3} \Psi(x_{\pi(1)}, x_{\pi(2)}, x_{\pi(3)})E(u_{\pi(1)}, u_{\pi(2)}, u_{\pi(3)}).\label{eq:Psibarsum}
\end{multline}

\begin{proposition}\label{prop:symm}
Let~$\Psi$ be a trilinear form on~$\ell_\infty^n$. Then, its symmetrization $\overline{\Psi}$ as in~\eqref{summetrization} satisfies:
\begin{itemize}
\setlength\itemsep{-5pt}
\item $\Delta(\overline\Psi) = \Delta(\Psi)$\\
\item $\|\overline\Psi\|_{\ell_2}^2 = 6\|\Psi\|_{\ell_2}^2$\\
\item $\|\overline\Psi\|_{\jcb}\leq 6\|\Psi\|_{\jcb}$.
\end{itemize}
\end{proposition}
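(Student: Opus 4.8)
The plan is to verify the three claimed identities/inequalities by unpacking the definition of the symmetrization $\overline\Psi = \sum_{\pi\in S_3}(\Psi\otimes E)\circ\pi$ on $\ell_\infty^{3n} \cong \ell_\infty^n(\ell_\infty^3)$, using the explicit formula \eqref{eq:Psibarsum} together with the fact that the three ``copies'' of $\C^3$ tag the coordinates so that for fixed basis vectors $e_a\otimes f_r$ (with $a\in[n]$, $r\in[3]$), only the single permutation $\pi$ with $(\pi(1),\pi(2),\pi(3))$ matching the pattern dictated by $E(u,v,w)=u_1v_2w_3$ contributes a nonzero term. Concretely, $(\Psi\otimes E)\circ\pi$ evaluated on $(e_{a_1}\otimes f_{r_1}, e_{a_2}\otimes f_{r_2}, e_{a_3}\otimes f_{r_3})$ is nonzero only when $(r_1,r_2,r_3)$ is the permutation $\pi^{-1}$ applied to $(1,2,3)$, in which case it equals $\Psi(e_{a_{\sigma(1)}},e_{a_{\sigma(2)}},e_{a_{\sigma(3)}})$ for the appropriate $\sigma$. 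This ``coordinate-tagging'' is the reason the symmetrization does not collapse, and it drives all three bullets.

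For $\|\overline\Psi\|_{\ell_2}^2 = 6\|\Psi\|_{\ell_2}^2$: by the above, the $6|[n]|^3$ coefficients of $\overline\Psi$ split into $6$ disjoint families (one per $\pi\in S_3$), each family being a permuted copy of the $|[n]|^3$ coefficients of $\Psi$; since the $E$-factors are all of modulus $1$ and the supports are disjoint, summing squares gives exactly $6\|\Psi\|_{\ell_2}^2$. For $\Delta(\overline\Psi) = \Delta(\Psi)$: fix one coordinate of $\overline\Psi$, say to $e_a\otimes f_r$; I would show the resulting $3n\times 3n$ slice is, up to relabeling rows/columns by which $(r_2,r_3)$-block they lie in, block-diagonal with blocks that are either zero or a slice of $\Psi$ (possibly a transposed/permuted slice, which has the same operator norm). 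Hence the norm of any slice of $\overline\Psi$ equals the norm of some slice of $\Psi$, and conversely every slice of $\Psi$ appears, giving equality. The inequality $\|\overline\Psi\|_{\jcb}\le 6\|\Psi\|_{\jcb}$ is the easiest: by the triangle inequality $\|\overline\Psi\|_{\jcb}\le\sum_{\pi}\|(\Psi\otimes E)\circ\pi\|_{\jcb}$, and $\|\cdot\|_{\jcb}$ is permutation-invariant, so it suffices to check $\|\Psi\otimes E\|_{\jcb}\le\|\Psi\|_{\jcb}$; this follows because plugging tuples of contractions $X,Y,Z:[3n]\to B_{M_d}$ into $(\Psi\otimes E)_d^{\sim}$ and writing each argument in its $\ell_\infty^3$-block decomposition realizes the result as $\widetilde\Psi_d$ applied to contractions built from the relevant sub-blocks (using that $E$ just selects coordinates), possibly after compressing to sub-blocks, which can only decrease the norm.

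The main obstacle I anticipate is the $\Delta(\overline\Psi)=\Delta(\Psi)$ step: one has to be careful about \emph{which} coordinate of $\overline\Psi$ is being fixed (first, second, or third), since $\overline\Psi$ is symmetric but the individual summands $(\Psi\otimes E)\circ\pi$ are not, and to check that the off-``diagonal'' blocks of the slice matrix really do vanish because the $E$-factor $u_1v_2w_3$ forces a unique assignment of the three $\C^3$-tags once one argument is fixed. Once the block structure is identified, the norm computation is immediate since the operator norm of a block-diagonal matrix is the max of the block norms and transposition/permutation of rows and columns preserves operator norm. I would organize the proof as three short lemmatic paragraphs, one per bullet, all resting on the explicit evaluation \eqref{eq:Psibarsum} and the disjointness of the six supports.
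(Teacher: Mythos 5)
Your proposal is correct and follows essentially the same strategy as the paper: all three bullets are derived from the coordinate-tagging effect of the tensor $E$, which forces the six summands $(\Psi\otimes E)\circ\pi$ of $\overline\Psi$ to have pairwise disjoint supports in the $\C^3$-indices. Items two and three are argued exactly as in the paper (disjoint supports with $E$-coefficients in $\{0,1\}$ give the $\ell_2$ identity; triangle inequality, permutation-invariance of $\|\cdot\|_{\jcb}$, and the observation that plugging contractions into the lift of $\Psi\otimes E$ simply restricts each variable to one $\ell_\infty^3$-block yield the $\jcb$ bound). The only place you deviate is the slice-norm bound $\Delta(\overline\Psi)\le\Delta(\Psi)$: you observe that, after fixing one coordinate to $e_i\otimes e_a$, the resulting $3n\times 3n$ slice has exactly two nonzero $n\times n$ blocks (at transposed positions $(b_0,c_0)$ and $(c_0,b_0)$ with $\{b_0,c_0\}=[3]\setminus\{a\}$), so a row permutation makes it block-diagonal and the norm is the maximum of the block norms, each a (possibly transposed) slice of $\Psi$. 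The paper instead bounds the slice directly via a short Cauchy--Schwarz estimate in the block decomposition of the test vectors. Both arguments exploit the same structure and are comparably short; your block-diagonal route makes the exact equality $\Delta(\overline\Psi)=\Delta(\Psi)$ a touch more transparent, while the paper's Cauchy--Schwarz route is marginally more generic in not requiring explicit identification of the surviving blocks. Either is fine.
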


\begin{proof}
We begin with the first item.
The lower bound $\Delta(\overline\Psi) \geq \Delta(\Psi)$ follows easily from the fact that
\beqn
\overline\Psi(x\otimes e_1,y\otimes e_2,z\otimes e_3)
=
\Psi(x,y,z).
\eeqn
By symmetry of~$\overline\Psi$, for the upper bound $\Delta(\overline\Psi) \leq \Delta(\Psi)$, it suffices to show that for any $i\in[n]$ and $a\in[3]$, the slice corresponding to the bilinear form $B:\C^{3n}\times \C^{3n}\to \C$ given by 
\beqn
B(x,y) = 
\overline\Psi(e_i\otimes e_a, x,y)
\eeqn
has operator norm at most~$\Delta(\Psi)$.
Let~$x,y\in \C^{3n}$ be unit vectors.
Write $x = x_1\otimes e_1 + x_2\otimes e_2 + x_3\otimes e_3$ for $x_1,x_2,x_3\in \C^n$ and similarly for~$y$.
Then,
\begin{align*}
|B(x,y)|
&=
\Big|\sum_{b,c=1}^3\overline\Psi(e_i\otimes e_a,x_b\otimes e_b, y_c\otimes e_c)\Big|\\
&=
\Big|\sum_{b,c=1}^3\sum_{\pi\in S_3} \big((\Psi\otimes E)\circ\pi\big)(e_i\otimes e_a,x_b\otimes e_b, y_c\otimes e_c)\Big|\\
&=
\Big|\sum_{\pi\in S_3} \sum_{b,c=1}^3 (\Psi\circ\pi)(e_i,x_b,y_c)\cdot (E\circ \pi)(e_a,e_b,e_c)\Big|.
\end{align*}
Observe that $(E\circ \pi)(e_a,e_b,e_c)$ equals~1 if $a=\pi(1),b=\pi(2),c=\pi(3)$ and~$0$ otherwise.
Hence, the above is at most
\begin{align*}
\sum_{\pi\in S_3\st \pi(1)=a}
 \big|(\Psi\circ\pi)(e_i,x_{\pi(2)},y_{\pi(3)})\big|
 &\leq
 \Delta(\Psi)\,\sum_{\pi\in S_3\st \pi(1)=a}\| x_{\pi(2)}\|\,\|y_{\pi(3)}\|.
\end{align*}
By the Cauchy-Schwarz inequality, the last sum is at most
\beqn
\Big( \sum_{\pi\in S_3\st \pi(1)=a}\| x_{\pi(2)}\|^2\Big)^{\tfrac{1}{2}}
\Big( \sum_{\pi\in S_3\st \pi(1)=a}\| y_{\pi(3)}\|^2\Big)^{\tfrac{1}{2}} \leq 1.
\eeqn
This proves the first item.

The second item is a straightforward calculation.
It follows from~\eqref{eq:Psibarsum} that 
\begin{align*}
\|\overline\Psi\|_{\ell_2}^2&=
\sum_{i_1,i_2,i_3=1}^n\sum_{j_1,j_2,j_3=1}^3\Big|\overline\Psi(e_{i_1}\otimes e_{j_1},e_{i_2}\otimes e_{j_2},e_{i_3}\otimes e_{j_3})\Big|^2\\
&=\sum_{i_1,i_2,i_3=1}^n\sum_{j_1,j_2,j_3=1}^3\Big|\sum_{\pi\in S_3}\Psi(e_{i_{\pi(1)}},e_{i_{\pi(2)}},e_{i_{\pi(3)}})E(e_{j_{\pi(1)}},e_{j_{\pi(2)}},e_{j_{\pi(3)}})\Big|^2.
\end{align*}
Observe that $E(e_{j_{\pi(1)}},e_{j_{\pi(2)}},e_{j_{\pi(3)}}) = 1$ only if $j_1,j_2,j_3\in[3]$ are distinct and that in that case there is a unique~$\pi\in S_3$ for which this holds.
Since for any fixed $\pi\in S_3$ we have
\beqn
\sum_{i_1,i_2,i_3=1}^n|\Psi(e_{i_{\pi(1)}},e_{i_{\pi(2)}},e_{i_{\pi(3)}})|^2
=
\|\Psi\|_{\ell^2}^2,
\eeqn
and there are 6 ways to choose $j_1,j_2,j_3$ distinct, the second item follows.

For the third item,  observe that the jointly completely bounded norm is commutative, which is to say that $\|\Psi\circ \pi\|_{\jcb} = \|\Psi\|_{\jcb}$ for every $\pi\in S_3$.
The claim then follows from the identity $\|\Psi\otimes E\|_{\jcb}=\|\Psi\|_{\jcb}$ and triangle inequality. 
To see the identity, recall the expressions~\eqref{jcb-norm} and~\eqref{eq:Phidtilde} for the jointly completely bounded norm.
Let~$d$ be a positive integer and let $X,Y,Z:[n]\times[3]\to B_{M_d}$. Then,
\begin{multline*}
\sum_{i,j,k=1}^n\sum_{a,b,c=1}^3\Psi(e_i,e_j,e_j)E(e_a,e_b,e_c)
X(i,a)\otimes Y(j,b)\otimes Z(k,c)\\
=
\sum_{i,j,k=1}^n
\Psi(e_i,e_j,e_j)
X(i,1)\otimes Y(j,2)\otimes Z(k,3).
\end{multline*}
Taking norms and suprema over $X,Y,Z$ and $d\in \N$ gives identity.
\end{proof}

With this, the proof of Theorem~\ref{thm:main} is straightforward.

\begin{proof}[ of Theorem~\ref{thm:main}]
Let $p\geq 5$ be a prime number and let $\Gamma =\Z_p$.
Let~$\Phi$ be a trilinear form as in~\eqref{def:APform} and let $f_0:\Gamma\to\C$ be as in Proposition~\ref{prop:weyl}.
Let~$\overline\Phi$ be the symmetrization of~$\Phi$ as in~\eqref{summetrization}.
Then, it follows from Corollary~\ref{cor:weylPhi} and Proposition~\ref{prop:symm} that $\|\overline\Phi\|_{\jcb} \leq 6p^2(2/p)^{1/8}$.
On the other hand, it follows from Lemma~\ref{lem:varo}, Proposition~\ref{prop:deltaphi} and Proposition~\ref{prop:symm} that $\|\overline\Phi\|_{\sym} \geq 6p^2$.
\end{proof}

\section{Alternative tensors}

A straightforward argument based on splitting the tensor associated to the trilinear form~$\Psi$ as in~\eqref{def:APform} into real and complex parts shows that Theorem~\ref{thm:main} holds also for a trilinear form  $\Phi:\ell_\infty^n\times\ell_\infty^n\times\ell_\infty^n\to \C$ whose associated tensor is real, which is to say that $\Phi(e_i,e_j,e_k)\in \R$ for every $i,j,k\in[n]$.
Alternatively, one can directly get such a form by replacing Proposition~\ref{prop:weyl} in our construction with the following statement~\cite[Exercise~11.1.17]{TV06},  giving a random example.

\begin{proposition}
Let $\Gamma$ be a finite Abelian group and $f:\Gamma\to\pmset{}$ be a uniformly random mapping. Then,  $\|f\|_{U^t(\Gamma)} \leq O_t(|\Gamma|^{-1/2^t})$
with probability $1 - o_t(1)$.
\end{proposition}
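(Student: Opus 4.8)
The plan is to compute the expectation defining $\|f\|_{U^t(\Gamma)}^{2^t}$ explicitly and show that, for a uniformly random sign function $f$, it concentrates around a quantity of order $|\Gamma|^{-1}$; since the $U^t$-norm is the $2^t$-th root of (the absolute value of) this expectation, a bound of $|\Gamma|^{-1/2^t}$ (up to the constant $O_t(1)$) follows. Concretely, recall that
\[
\|f\|_{U^t(\Gamma)}^{2^t}
=
\Exp_{x,h_1,\dots,h_t}\Big[\prod_{\omega\in\{0,1\}^t} \mathcal{C}^{|\omega|} f\big(x+\omega\cdot h\big)\Big],
\]
where $\mathcal{C}$ denotes complex conjugation and $\omega\cdot h=\sum_i \omega_i h_i$; for real-valued $f$ the conjugations disappear, so this is $\Exp_{x,h}\big[\prod_{\omega} f(x+\omega\cdot h)\big]$. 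First I would split the outer expectation according to whether the tuple $(h_1,\dots,h_t)$ is \emph{non-degenerate}, meaning the $2^t$ points $x+\omega\cdot h$ are all distinct (equivalently, independent of $x$, this happens iff the $h_i$ are ``linearly independent'' in the relevant sense). For non-degenerate tuples, the inner expectation over $x$ (and over the independent signs $f$) vanishes, because it is a product of $2^t\geq 2$ independent mean-zero $\pm1$ random variables. For degenerate tuples, the product collapses: repeated points contribute $f(\cdot)^2=1$, so the inner expectation over $x$ is bounded by $1$ in absolute value.

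The second step is to bound the probability that a uniformly random $(h_1,\dots,h_t)\in\Gamma^t$ is degenerate. Degeneracy means there exist $\omega\neq\omega'$ in $\{0,1\}^t$ with $(\omega-\omega')\cdot h=0$, i.e. some nonzero $\pm1/0$ combination of the $h_i$ vanishes. For each fixed nonzero vector $v\in\{-1,0,1\}^t$, the number of $h\in\Gamma^t$ with $v\cdot h=0$ is at most $|\Gamma|^{t-1}\cdot(\text{bounded factor})$ — more carefully, writing $v_i$ for the first nonzero coordinate, the equation determines $h_i$ up to the kernel of multiplication by $v_i$, whose size is at most $|\Gamma|$ over the number of solutions one expects; in any case a union bound over the at most $3^t$ such vectors $v$ gives $\Pr[\text{degenerate}]\leq O_t(|\Gamma|^{-1})$, which is the source of the $O_t$ constant. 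Hence $\big|\Exp_{x,h}[\cdots]\big|\leq \Pr[\text{degenerate}] \leq O_t(|\Gamma|^{-1})$, and taking $2^t$-th roots yields $\|f\|_{U^t(\Gamma)}\leq O_t(|\Gamma|^{-1/2^t})$ — but so far only for the \emph{expected} value of the $2^t$-th power over the randomness of $f$.

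The third and main step is the concentration argument turning this statement about $\Exp_f$ into a high-probability bound. The quantity $\|f\|_{U^t}^{2^t}$, viewed as a function of the $|\Gamma|$ independent signs $f(g)$, is a multilinear polynomial of degree at most $2^t$ (each monomial involves at most $2^t$ distinct points). One can therefore apply a concentration inequality for low-degree polynomials of independent bounded random variables — for instance a martingale/Azuma argument combined with a hypercontractive (Bonami–Beckner) tail bound, or the Hanson–Wright-type moment bounds for Rademacher chaos — to show that $\|f\|_{U^t}^{2^t}$ deviates from its mean by more than, say, $|\Gamma|^{-1}$ only with probability $o_t(1)$. I expect this step to be the main obstacle, because making the concentration clean requires either invoking the right black-box (degree-$2^t$ Rademacher chaos concentration) or unwinding the combinatorics of which monomials actually survive; the cleanest route is probably to bound a high moment $\Exp_f\big[(\|f\|_{U^t}^{2^t})^{2m}\big]$ by expanding and counting tuples, since this is exactly a higher Gowers-type sum, and then apply Markov. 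Having done so, combining the three steps — mean of order $|\Gamma|^{-1}$, deviation $o_t(1)$-rarely exceeding $|\Gamma|^{-1}$, and the $2^t$-th root — gives $\|f\|_{U^t(\Gamma)}\leq O_t(|\Gamma|^{-1/2^t})$ with probability $1-o_t(1)$, as claimed. (As the excerpt notes, this is \cite[Exercise~11.1.17]{TV06}, so one may alternatively simply cite it; the above is the argument one would write out if a self-contained proof is wanted.)
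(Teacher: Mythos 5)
The paper does not prove this proposition; it is cited verbatim from~\cite[Exercise~11.1.17]{TV06} and used only as an alternative to Proposition~\ref{prop:weyl} for producing a function with small $U^3$-norm, so there is no argument in the paper against which to compare yours. On its own terms, your first two steps are essentially correct and match the standard approach. A small imprecision in step one: for a fixed $(x,h)$, the inner expectation $\Exp_f\big[\prod_\omega f(x+\omega\cdot h)\big]$ is $1$ exactly when every point of the multiset has \emph{even} multiplicity and $0$ otherwise; your bound $\Exp_f\big[\|f\|_{U^t}^{2^t}\big]\le\Pr_h[\text{some collision}]$ is still valid since no collision forces a zero. In step two, each nonzero $v\in\{-1,0,1\}^t$ has exactly $|\Gamma|^{t-1}$ solutions $h$ to $v\cdot h=0$, because multiplication by $\pm1$ is a bijection on any Abelian group; no ``bounded factor'' or kernel discussion is needed, and the union bound gives $\Pr[\text{collision}]\le(3^t-1)/|\Gamma|$.

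The genuine gap is in step three, which you flag but do not carry out, and which really is needed: Markov on the first moment only gives $\Pr\big[\|f\|_{U^t}^{2^t}>C\,\Exp_f\|f\|_{U^t}^{2^t}\big]\le 1/C$, which is bounded away from $0$ for fixed $C$, so one cannot simultaneously keep the bound $O_t(|\Gamma|^{-1/2^t})$ and drive the failure probability to $o_t(1)$. One caution about your suggested ``higher Gowers-type sum'' route: if that means exploiting the monotonicity $\|f\|_{U^t}\le\|f\|_{U^{t+k}}$ to bound $\Exp_f\|f\|_{U^t}^{2^{t+k}}\le\Exp_f\|f\|_{U^{t+k}}^{2^{t+k}}=O_{t+k}(|\Gamma|^{-1})$, this is too lossy — the right-hand side does not decay in $k$, so these moments do not witness concentration around $\Theta_t(|\Gamma|^{-1})$. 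What does close the gap (for $t\ge2$) is a direct second-moment count: $\Exp_f\big[(\|f\|_{U^t}^{2^t})^2\big]$ equals the probability over two independent copies $(x,h),(x',h')$ that the combined multiset of $2^{t+1}$ points is all-even, which splits as $\big(\Exp_f\|f\|_{U^t}^{2^t}\big)^2$ plus a ``cross-matching'' term. That cross term — dominated by the event that the two cubes coincide as sets, costing $O_t(|\Gamma|^{-(t+1)})$, or that one cube degenerates (cost $O_t(|\Gamma|^{-1})$) and its odd-multiplicity residue, of size at least $2$, forces at least two of the $t+1$ free coordinates of $(x',h')$ (cost $O_t(|\Gamma|^{-2})$) — is $O_t(|\Gamma|^{-3})$, i.e.\ $o\big((\Exp_f\|f\|_{U^t}^{2^t})^2\big)$, so Chebyshev yields the stated $1-o_t(1)$ for $t\ge2$. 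The case $t=1$ is simply the central limit theorem for $\Exp_{x\in\Gamma}f(x)$. So your plan is sound, but the concentration step needs to be done by a second-moment count of paired configurations (or by hypercontractivity after such a variance bound), not by the Gowers monotonicity shortcut.
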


\section*{Acknowledgments} 
The authors thank the anonymous referee for pointing out the short abstract operator space proof of Lemma~\ref{lem:varo}.

\bibliographystyle{amsplain}

\providecommand{\bysame}{\leavevmode\hbox to3em{\hrulefill}\thinspace}
\providecommand{\MR}{\relax\ifhmode\unskip\space\fi MR }
\providecommand{\MRhref}[2]{%
  \href{http://www.ams.org/mathscinet-getitem?mr=#1}{#2}
}
\providecommand{\href}[2]{#2}


\begin{dajauthors}
\begin{authorinfo}[jop]
  Jop Bri\"{e}t\\
  CWI - Centrum Wiskunde \& Informatica\\
  Amsterdam, The Netherlands\\
  j.briet\imageat{}cwi.nl\\
  \url{https://homepages.cwi.nl/~jop/}
\end{authorinfo}
\begin{authorinfo}[carlos]
  Carlos Palazuelos\\
  Dep. An\'{a}lisis Matem\'{a}tico y Matem\'{a}tica Aplicada\\
  Universidad Complutense de Madrid \\
  Madrid, Spain\\
  cpalazue\imageat{}mat.ucm.es\\
  \url{https://www.ucm.es/mathqi/carlos-palazuelos-cabezon}
\end{authorinfo}
\end{dajauthors}

\end{document}